\newtheorem{theorem}{Theorem}[section]
\newtheorem{proposition}[theorem]{Proposition}
\newtheorem{lemma}[theorem]{Lemma}
\newtheorem{corollary}[theorem]{Corollary}
\theoremstyle{definition}
\newtheorem{definition}[theorem]{Definition}
\newtheorem{example}[theorem]{Example}
\theoremstyle{remark}
\newtheorem*{remark}{Remark}
\numberwithin{equation}{section}
\newcommand{\RE}{\mbox{$\mathbb{R}$}}
\newcommand{\CEP}[1]{\mbox{$\mathbb{C}^{#1}$}}
\newcommand{\N}{\mbox{$\mathbb{N}$}}
\newcommand{\D}{\mbox{$\mathbb{D}$}}
\newcommand{\supp}{\mbox{supp}}
\newcommand{\comp}[1]{\mbox{$\mathcal{C}{#1}$}}
\newcommand{\Ker}{\mbox{$\mathcal{N}$}}
\newcommand{\E}{\mbox{$\mathcal{E}$}}
\newcommand{\Eo}{\mbox{$\mathcal{E}_{0}$}}
\newcommand{\F}{\mbox{$\mathcal{F}$}}
\newcommand{\MPSH}[1]{\mbox{$\mathcal{MPSH}(#1)$}}
\newcommand{\PSH}[1]{\mbox{$\mathcal{PSH}(#1)$}}
\newcommand{\K}{\mbox{$\mathcal{K}$}}
\newcommand{\B}{\mbox{$\mathcal{B}$}}
\newcommand{\ddcn}[1]{\mbox{$\left(dd^{c}#1\right)^{n}$}}
\newcommand{\ddck}[2]{\mbox{$\left(dd^{c}#1\right)^{#2}$}}
\begin{document}

\title{Monge-Amp\`{e}re measures on pluripolar sets}
\author{Per \AA hag}
\address{Department of Mathematics\\ Ume\aa \ University\\
SE-901 87 Ume\aa \\ Sweden}
\author{Urban Cegrell}
\address{Department of Mathematics\\ Ume\aa \ University\\ SE-901 87 Ume\aa \\ Sweden}
\email{Urban.Cegrell@math.umu.se}
\author{Rafa\l\ Czy{\.z}}
\address{Institute of Mathematics\\ Jagiellonian University\\ Reymonta 4\\ 30-059 Krak\'ow\\ Poland}
\email{Rafal.Czyz@im.uj.edu.pl}
\thanks{The third-named author was partially supported by ministerial grant number N N201 3679 33}
\author{Pham Hoang Hiep}
\address{Department of Mathematics\\
Truong Dai hoc Su pham Ha Noi\\ 136 Xuan Thuy, Cau Giay, Ha Noi \\
Vietnam} \email{phhiep\_vn@yahoo.com}
\keywords{Complex Monge-Amp\`{e}re operator, Dirichlet problem,
pluripolar set, plurisubharmonic function.}
\subjclass[2000]{Primary 32W20; Secondary 32U15.}

\begin{abstract} In this article we solve
the complex Monge-Amp\`{e}re equation for measures with large
singular part. This result generalizes classical results by
Demailly, Lelong and Lempert a.o., who considered singular parts
carried on discrete sets. By using our result we obtain a
generalization of Ko{\l}odziej's subsolution theorem. More
precisely, we prove that if a non-negative Borel measure is
dominated by a complex Monge-Amp\`{e}re measure, then it is a
complex Monge-Amp\`{e}re measure.

\end{abstract}

\maketitle

\section{Introduction}

In this article we study the complex Monge-Amp\`{e}re equation
$\ddcn{u}=\mu$, where $\mu$ is a given non-negative Radon measure
and $\ddcn{\,\cdot\,}$ denotes the complex Monge-Amp\`{e}re
operator. If $\mu$ puts mass on a pluripolar set, then the
solution to $\ddcn{u}=\mu$ cannot generally be uniquely
determined (see, e.g.~\cite{demailly_bok,zeriahi_mich}). Therefore
the question of existence of solutions is our main interest. The
first result was due to Lempert who, in~\cite{lempert1,lempert2}, obtained
 a positive result for the case when
the support of the given measure is a single point. He considered
solutions with real-analytic boundary values and logarithmic
singularity near the support of the measure. The underlying
domain was assumed to be a strictly convex domain in $\CEP{n}$
(see also~\cite{bracci} and Theorem~1.5 in~\cite{klimek_green}).
In this context, it is worth to mention the article~\cite{celik_poletsky}, where Celik and
Poletsky also studies the Monge-Amp\`{e}re equation with the Dirac measure as given measure.

Throughout this article it is always assumed that $\Omega$ is a bounded
hyperconvex domain (see section~\ref{sec_maj} for the definition of hyperconvex domain).
Demailly proved (Theorem~4.3 in~\cite{demailly_Z}) that $\ddcn{g_{A_1}}=(2\pi)^n
\delta_z$ on a hyperconvex domain $\Omega$, where $\delta_z$ is
the Dirac measure at $z$, and $g_z$ is the pluricomplex Green
function (introduced in~\cite{klimek,zahariuta}) with pole set
containing a single point $A_1=\{z\}$. In~\cite{lelong2}, Lelong
introduced the pluricomplex Green function with a finite pole set,
$A_k=\{z_1,\ldots,z_k\}$, and with positive weights
$v_1,\ldots,v_k$, $v_l>0$, $l=1,\ldots, k$, and proved that
$\ddcn{g_{A_k}}=(2\pi)^n\sum_{j=1}^k v_j^n\delta_{z_j}$
(Proposition~8 in~\cite{lelong2}). The pluricomplex Green function
is not a solution  to the complex Monge-Amp\`{e}re equation if we
want the solution to have other boundary values than those which are
identically zero. Given a discrete measure with
compact support in a hyperconvex domain
$\Omega$, Zeriahi proved in~\cite{zeriahi_mich} that the
complex Monge-Amp\`{e}re equation is solvable for certain continuous
boundary values. In~\cite{xing_sing}, Xing generalized Zeriahi's
result in the case where the given boundary values are identically
zero. Xing considered measures that were majorized by the sum of a
linear combination of countable numbers of Dirac measures with
compact support and a certain regular Monge-Amp\`{e}re
measure.

 We shall consider the class $\E$ introduced in~\cite{cegrell_gdm}. It is
the largest set of non-positive
plurisubharmonic functions defined on a hyperconvex domain
$\Omega$ for which the complex Monge-Amp\`{e}re operator is
well-defined  (Theorem~4.5 in~\cite{cegrell_gdm}). Let $u\in\E$
and $0\leq g\leq 1$ be a $\chi_{\{u=-\infty\}}\ddcn{u}$-measurable
function that vanishes outside $\{u=-\infty\}$. We define
\[
u^g=\inf_{f\in T \atop f\leq g} \left(\sup\{u_\tau: f\leq \tau,\;
\tau \text{ is a bounded lower semicontinuous
function}\}\right)^*\, ,
\]
where $u_\tau$ is as in Definition~\ref{DP_def_aux1}, $T$ is the
family of certain simple functions, and $(w)^*$ denotes the upper
semicontinuous regularization of $w$. We prove that $u^g\in\E$ and
$\ddcn{u^g}=g\ddcn{u}$. In particular, this implies that for any
pluripolar Borel set $E$ in $\Omega$ we have that
$\ddcn{u^{\chi_E}}=\chi_E\ddcn{u}$, where $\chi_E$ is the
characteristic function for the set $E$ in $\Omega$
(Theorem~\ref{DP_thm_simple}). Example~\ref{DP_exem} shows that
our given singular measure $\chi_E\ddcn{u}$ is not necessarily a
discrete measure. Hence, Theorem~\ref{DP_thm_simple} yields
solutions to the complex Monge-Amp\`{e}re equation for a larger
class of singular measures than Zeriahi and Xing. The following statement,
which is included in Theorem~\ref{DP_thm_sub}, is the main result of this
article:

\bigskip

\noindent {\bf Theorem~\ref{DP_thm_sub}} (3): {\em Assume that $\mu$ is a non-negative
Radon measure.  If there exists a function $w\in\E$ such that $\mu\leq\ddcn{w}$, then
there exists a function $u\in\E$ such that $w+H\leq u\leq H$ and $\ddcn{u}=\mu$.}

\bigskip

Theorem~\ref{DP_thm_sub} is a generalization of the celebrated subsolution
theorem by Ko{\l}odziej (\cite{kolo_range};
for an alternative proof see section~4 in~\cite{kolo_mem}).
Example~5.4 in~\cite{cegrell_bdd} shows that there exists a
non-negative Radon measure $\mu$ such that there does not exist any
function $u\in\E$ that satisfies $\ddcn{u}=\mu$.

\bigskip

This article is organized as follows. In Section~\ref{sec_maj}
some definitions will be recalled. One of the most powerful tools
when working with the complex Monge-Amp\`{e}re operator is the
{\em comparison principle}. In Section~\ref{sec_xing} we obtain
the comparison principle for certain functions in $\E$
(Corollary~\ref{xing_cor_CP}). To prove the comparison principle
we shall follow an idea from~\cite{xing_cont} and firstly prove a
Xing type inequality (Theorem~\ref{xing_cor_1}). The last section is devoted
to the proof of Theorem~\ref{DP_thm_sub}.

\bigskip

%
%

The authors would like to thank
Egmont Porten and Alexander Rashkovskii for many valuable comments on and suggestions for this
manuscript. This research was partly done during the third- and
fourth-named authors' visit to Mid Sweden University in Sundsvall,
Sweden, in 2006 and 2007. This article was completed during their
visit to Ume\aa \ University in Ume\aa, Sweden, in 2008. They wish
to thank the members of both Departments of Mathematics for their
kind hospitality.

\section{Background and definitions}\label{sec_maj}
Throughout this article it is always assumed that $\Omega$ is a bounded
hyperconvex domain. Recall that $\Omega\subseteq\CEP{n}$, $n\geq 1$ is a bounded hyperconvex domain
if it is a bounded, connected, and open set, such that there exists a bounded
plurisubharmonic function $\varphi:\Omega\rightarrow (-\infty,0)$ such that
the closure of the set $\{z\in\Omega : \varphi(z)<c\}$ is compact in $\Omega$,
for every $c\in (-\infty, 0)$.

In this article we adapt the notation that  $\PSH{\Omega}$ is the
family of plurisubharmonic functions defined on $\Omega$ and
$\MPSH{\Omega}$ for the maximal plurisubharmonic functions. For the
definitions and basic facts of these functions we refer to~\cite{klimek}.

We say that a
 bounded plurisubharmonic function $\varphi$ defined on $\Omega$ belongs
 to $\Eo$ if $\lim_{z\rightarrow\xi} \varphi (z)=0$, for every
$\xi\in\partial\Omega$, and $\int_{\Omega} \ddcn{\varphi}<
+\infty$. It was proved in Lemma~3.1 in~\cite{cegrell_gdm} that
$C^{\infty}_{0}(\Omega)\subset\Eo\cap C(\bar\Omega)-\Eo\cap
C(\bar\Omega)$.

\begin{definition} Let $\E$ $(=\E (\Omega))$ be the class of plurisubharmonic
functions $\varphi$ defined on $\Omega$, such that for each
$z_{0}\in\Omega$ there exists a neighborhood $\omega$ of $z_{0}$
in $\Omega$ and a decreasing sequence
$[\varphi_{j}]_{j=1}^{\infty}$, $\varphi_{j}\in\Eo$, that
converges pointwise to $\varphi$ on $\omega$ as $j\to +\infty$,
and
\[\sup_{j}\int_{\Omega} \ddcn{\varphi_{j}}<+\infty\, .\]
Furthermore, let $\F$  $(=\F(\Omega))$ be the subset of $\E$  containing those
functions with smallest maximal plurisubharmonic majorant identically zero and with finite total
Monge-Amp\`{e}re mass.
\end{definition}

If there can be no misinterpretation a sequence
$[\,\cdot\,]_{j=1}^{\infty}$ will be denoted by $[\,\cdot\,]$.
Shiffman and Taylor gave an example in~\cite{siu}
 that shows that it is not possible to extend the complex
 Monge-Amp\`{e}re operator in a meaningful way to the whole class of
 plurisubharmonic functions and still have the
range contained in the class of non-negative measures (see
also~\cite{kiselman_ex}). In~\cite{cegrell_gdm} the second-named
author proved that the complex Monge-Amp\`{e}re operator is
well-defined on $\E$. As mentioned in the introduction he
proved that $\E$ is the natural domain of
definition for the complex Monge-Amp\`{e}re operator (Theorem~4.5
in~\cite{cegrell_gdm}). In~\cite{block_C2}, B\l ocki proved that $\E = \{\varphi\in\PSH{\Omega}\cap W^{1,2}_{loc}(\Omega):\; \varphi\leq 0 \}$ when $n=2$,
and showed that this equality
is not valid  for $n\geq 3$. Later, in~\cite{block_Cn}, he obtained
a complete characterization of $\E$ for $n\geq 1$. Another
characterization of $\E$ was proved in~\cite{cegrell_Kolo_sub} in
terms of the so-called $\varphi$-capacity.

In this article a {\em fundamental sequence} $[\Omega_{j}]$
is always an increasing sequence of strictly pseudoconvex
subsets of $\Omega$ such that for every $j\in\N$ we have that,
$\Omega_{j}\Subset\Omega_{j+1}$, and
$\bigcup_{j=1}^{\infty}\Omega_{j}=\Omega$. Here  $\Subset$ denotes
that $\Omega_{j}$ is relatively compact in $\Omega_{j+1}$.

%
%
\begin{definition}\label{maj_def_seq}
Let $u\in\PSH{\Omega}$, $u\leq 0$, and let $[\Omega_j]$ be a
fundamental sequence $\Omega_{j}$ . The function $u^j$ is then
defined by
\[
u^{j} = \sup\big\{\varphi\in\PSH{\Omega}: \varphi\leq
u\;\;\mbox{on}\;\; \comp{\Omega_{j}}\big\}\, ,
\]
where $\comp{\Omega_{j}}$ denotes the complement of $\Omega_{j}$
in $\Omega$.
\end{definition}
Let $[\Omega_j]$ be a fundamental sequence and let
$u\in\PSH{\Omega}$, $u\leq 0$, then $u^j\in\PSH{\Omega}$ and
$u^j=u$ on $\comp{\Omega_{j}}$. Definition~\ref{maj_def_seq}
implies that $[u^j]$ is an increasing sequence and therefore
$\lim_{j\to+\infty}u^j$ exists q.e. (quasi-everywhere) on
$\Omega$. Hence, the function $\tilde u$ defined by $\tilde
u=\left(\lim_{j\to+\infty}u^j\right)^*$ is plurisubharmonic on
$\Omega$. Moreover, if $u\in\E$, then
by~\cite{cegrell_gdm} we have that $\tilde u\in\E$, since $u\leq
\tilde u\leq 0$, and by~\cite{block_C2,block_Cn} it follows that
$\tilde u$ is maximal on $\Omega$. Let $u,v\in\E$ and
$\alpha\in\RE$, $\alpha\geq 0$, then it follows from
Definition~\ref{maj_def_seq} that $\widetilde{u+v} \geq \tilde u +
\tilde v$ and $\widetilde{\alpha\, u}=\alpha\, \tilde{u}$.
Moreover, if $u\geq v$, then $\tilde u\geq \tilde v$. It
follows from~\cite{block_C2,block_Cn} that
$\E\cap\MPSH{\Omega}=\{u\in\E:\tilde u=u\}$ . Set
\[
\Ker=\{u\in\E:\tilde u=0\}\, .
\]
Then we have that $\Ker$ is a convex cone and that $\Ker$ is
precisely the set of functions in $\E$ with smallest maximal
plurisubharmonic majorant identically zero.

\begin{definition}\label{maj_def_N(H)} Let $\K\in \{\, \Eo,\,
\F,\, \Ker\,\}$. We say that a plurisubharmonic function $u$
defined on $\Omega$ belongs to the class $\K(\Omega,H)$, $H\in\E$,
if there exists a function $\varphi\in\K$ such that
\[
H\geq u\geq \varphi + H\, .
\]
\end{definition}

Note that $\K(\Omega,0)=\K$. The following approximation theorem was proved by the
second-named author in~\cite{cegrell_gdm}.

\begin{theorem}\label{maj_thm_appr}
Let $u\in\PSH{\Omega}$, $u\leq 0$. Then there exists a decreasing
sequence $[u_{j}]$, $u_{j}\in\Eo\cap C(\bar\Omega)$, which
converges pointwise to $u$ on $\Omega$, as $j$ tends to $+\infty$.
\end{theorem}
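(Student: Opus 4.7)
The idea is to combine mollification of $u$ on interior subdomains with a continuous plurisubharmonic exhaustion near $\partial\Omega$. Fix a continuous exhaustion $\varphi\in\PSH{\Omega}\cap C(\bar\Omega)$ with $\varphi<0$ on $\Omega$ and $\varphi|_{\partial\Omega}=0$, available because $\Omega$ is hyperconvex, and take the fundamental sequence in the form $\Omega_j=\{\varphi<-c_j\}$ for a null sequence $c_j\searrow 0$, so that $\partial\Omega_j$ is a level set of $\varphi$. Regularise by the standard radial convolution $v_j:=u\ast\rho_{\epsilon_j}$, smooth and plurisubharmonic on a neighbourhood of $\overline{\Omega_{j+1}}$ for $\epsilon_j$ small enough.

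Define
\[
u_j(z):=\max\bigl(v_j(z)-\tfrac{1}{j},\ A_j\varphi(z)\bigr)\text{ on }\Omega_{j+1},\qquad u_j(z):=A_j\varphi(z)\text{ on }\Omega\setminus\Omega_{j+1},
\]
with $A_j>0$ chosen so that \textup{(a)} $A_j\varphi\leq v_j-1/j$ on $\overline{\Omega_j}$, which needs $A_jc_j\geq\sup_{\overline{\Omega_j}}(-v_j)+1/j$, and \textup{(b)} $A_j\varphi>v_j-1/j$ on an open collar inside $\Omega_{j+1}$ adjacent to $\partial\Omega_{j+1}$, which needs $A_jc_{j+1}\leq 1/j$. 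Both are arranged by fixing $A_j$ large enough for (a) and then choosing $c_{j+1}$ small enough for (b). By (b) the two formulas agree with $A_j\varphi$ continuously across $\partial\Omega_{j+1}$, so $u_j\in\PSH{\Omega}\cap C(\bar\Omega)$ is bounded and vanishes on $\partial\Omega$; the finiteness $\int_{\Omega}\ddcn{u_j}<+\infty$ required for $\Eo$ is automatic for bounded plurisubharmonic functions with continuous zero boundary values (Chern--Levine--Nirenberg). Pointwise convergence is immediate: for every $z\in\Omega$, eventually $z\in\Omega_j$, and then by (a) the maximum picks the first argument, so $u_j(z)=v_j(z)-1/j\searrow u(z)$ by the standard mean-value description of radial mollification of plurisubharmonic functions. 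Monotonicity $u_{j+1}\leq u_j$ is enforced inductively by choosing $(\epsilon_{j+1},A_{j+1},c_{j+2})$ so that $v_{j+1}-1/(j+1)\leq u_j$ on $\overline{\Omega_{j+2}}$ and $A_{j+1}\varphi\leq u_j$ on all of $\Omega$, which is always possible since $u_j$ has been completely fixed at stage $j$.

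The main obstacle I expect is the simultaneous parameter bookkeeping: conditions (a), (b), the decreasing property, and the pointwise convergence must all be compatible, which is genuinely delicate when $u$ is highly singular because $\sup_{\overline{\Omega_j}}(-v_j)$ can then be very large and condition (a) forces $A_j$ to be correspondingly large, which in turn squeezes $c_{j+1}$ through (b). This is however only a careful sequential finite adjustment at each stage, not a deep analytic obstruction; the real content of the theorem is hidden in the hyperconvexity, which guarantees the continuous exhaustion $\varphi$ that drives the gluing.
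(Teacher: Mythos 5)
Your overall strategy --- mollify on an exhaustion and glue with a large multiple of a continuous negative exhaustion function --- is the right one; note that the paper does not actually prove this theorem (it quotes it from Cegrell's paper), but its proof of Proposition~\ref{maj_prop_appr} runs along exactly these lines. However, two steps of your construction fail as written, and the decisive one is monotonicity. Condition (a) forces $u_j=v_j-\tfrac1j$ on $\overline{\Omega_j}$, so your requirement $v_{j+1}-\tfrac1{j+1}\le u_j$ on $\overline{\Omega_{j+2}}$ amounts to $u\ast\rho_{\epsilon_{j+1}}\le u\ast\rho_{\epsilon_j}-\tfrac{1}{j(j+1)}$ on $\overline{\Omega_j}$, i.e.\ a uniform strict gap between two mollifications of $u$, both of which majorize $u$. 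This is impossible at any point $z_0\in\overline{\Omega_j}$ where $u\ast\rho_{\epsilon_j}(z_0)<u(z_0)+\tfrac{1}{j(j+1)}$ --- for instance wherever $u$ is pluriharmonic on an $\epsilon_j$-ball, and such points exist in general since $u\ast\rho_\epsilon\to u$ in $L^1_{loc}$. No later choice of $(\epsilon_{j+1},A_{j+1},c_{j+2})$ can repair this, because $u\ast\rho_{\epsilon_{j+1}}\ge u$ always. The standard repair, used verbatim in the paper's proof of Proposition~\ref{maj_prop_appr}, is to give up on making the glued functions $u_j'$ decreasing and instead arrange $u_j'+\tfrac1j\ge u_{j+1}'+\tfrac1{j+1}$ (which only needs $v_{j+1}\le v_j$ and $\varphi\le 0$), and then set $u_j=\sup_{k\ge j}u_k'$; this sequence is decreasing, still converges pointwise to $u$, and is upper semicontinuous because it is a decreasing limit of maxima of finitely many $u_k'$.

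The second gap is the parameter circularity, which you flag but do not resolve, and which is not merely bookkeeping. Condition (a) forces $A_j\ge c_j^{-1}\bigl(\sup_{\overline{\Omega_j}}(-v_j)+\tfrac1j\bigr)$, and $\sup_{\overline{\Omega_j}}(-u\ast\rho_{\epsilon_j})\nearrow\sup_{\overline{\Omega_j}}(-u)=+\infty$ as $\epsilon_j\to 0$ whenever $u$ is unbounded on $\overline{\Omega_j}$. At the same time $c_{j+1}$ must satisfy both $c_{j+1}\le\tfrac{1}{jA_j}$ and the unstated constraint that $\overline{\Omega_{j+1}}=\overline{\{\varphi<-c_{j+1}\}}$ lie inside the set where $u\ast\rho_{\epsilon_j}$ is defined, i.e.\ $c_{j+1}\ge\sup\{-\varphi(z):d(z,\partial\Omega)\le\epsilon_j\}$. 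These two requirements pull in opposite directions, and since a general hyperconvex domain admits no exhaustion with a prescribed decay rate at the boundary, there need not exist any admissible $\epsilon_j$ at all. The cure is to truncate before gluing: take $v_j=\max(u\ast\rho_{\epsilon_j},-M_j)$ with $M_j\nearrow+\infty$ chosen first, so that $A_j\le c_j^{-1}(M_j+\tfrac1j)$ is fixed before $\epsilon_j$ and $c_{j+1}$ are selected; pointwise convergence to $u$ survives. Finally, a smaller point: finiteness of $\int_{\Omega}\ddcn{u_j}$ is not ``automatic by Chern--Levine--Nirenberg'' for bounded plurisubharmonic functions with continuous zero boundary values --- that is precisely why the mass condition appears in the definition of $\Eo$. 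You should instead take $\varphi\in\Eo\cap C(\bar\Omega)$, $\varphi\not\equiv 0$ (which exists by Lemma~3.1 of the cited Cegrell paper), and conclude $u_j\in\Eo$ from $0\ge u_j\ge A_j\varphi$.
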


Theorem~\ref{maj_thm_appr} yields among other things the following
simple and useful observation.

\begin{proposition}\label{maj_prop_appr}
Let $H\in\E$ and $u\in \PSH{\Omega}$ be such that $u\leq H$, then
there exists  a decreasing sequence $[u_{j}]$, $u_{j}\in\Eo (H)$,
that converges pointwise to $u$ on $\Omega$, as $j$ tends to
$+\infty$. Moreover, if $H\in\PSH{\Omega}\cap C(\bar\Omega)$, then
the decreasing sequence $[u_{j}]$ can be chosen such  that
$u_j\in\Eo(H)\cap C(\bar\Omega)$.
\end{proposition}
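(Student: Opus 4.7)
The proof splits naturally into the two stated parts. For the first (existence) statement, I would fix a negative plurisubharmonic exhaustion $\rho \in \Eo$, which exists on any bounded hyperconvex domain, and set
\[
u_{j} := \max(u,\, H + j\rho).
\]
This is plurisubharmonic as a maximum of two plurisubharmonic functions; it satisfies $u_{j} \leq H$ because both $u \leq H$ by hypothesis and $H + j\rho \leq H$ since $\rho \leq 0$; it is bounded below by $H + j\rho = \varphi_{j} + H$ with $\varphi_{j} := j\rho \in \Eo$, placing $u_{j}$ in $\Eo(H)$; the sequence is decreasing in $j$ because $H + j\rho$ is; and $u_{j} \downarrow u$ pointwise on $\Omega$ because $j\rho \downarrow -\infty$ there.

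For the continuity refinement, assume $H \in \PSH{\Omega} \cap C(\bar\Omega)$ and now choose $\rho \in \Eo \cap C(\bar\Omega)$. Apply Theorem~\ref{maj_thm_appr} to $u$ to produce $v_{j} \in \Eo \cap C(\bar\Omega)$ with $v_{j} \downarrow u$. The naive choice $\max(v_{j}, H + j\rho)$ will not in general lie below $H$, since the $v_{j}$ need not be dominated by $H$. To correct this I pass to the plurisubharmonic envelope of the continuous obstacle $\min(v_{j}, H)$:
\[
w_{j} := \Bigl(\sup\bigl\{\phi \in \PSH{\Omega} : \phi \leq \min(v_{j}, H) \text{ on } \Omega\bigr\}\Bigr)^{*}.
\]
Then $w_{j} \leq \min(v_{j}, H) \leq H$; $w_{j} \geq u$ because $u$ is itself admissible in the envelope; $w_{j}$ decreases in $j$ as the obstacle does; and the sandwich $u \leq w_{j} \leq v_{j}$ forces $w_{j} \downarrow u$ pointwise. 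The envelope-continuity theory for continuous obstacles on hyperconvex domains supplies $w_{j} \in C(\bar\Omega)$, with the exhaustion $\rho$ providing barriers at every boundary point. I then set
\[
u_{j} := \max(w_{j},\, H + j\rho),
\]
which is plurisubharmonic, continuous on $\bar\Omega$, bounded above by $H$ and below by $H + j\rho = \varphi_{j} + H$ with $\varphi_{j} \in \Eo$, decreasing in $j$, and convergent pointwise to $u$. Hence $u_{j} \in \Eo(H) \cap C(\bar\Omega)$, as required.

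The principal technical obstacle is the continuity of the envelope $w_{j}$ up to $\partial\Omega$: plurisubharmonicity and monotonicity of $w_{j}$ are automatic, but continuity at the boundary requires a barrier argument leaning on the hyperconvex exhaustion, essentially a Walsh--Bedford--Taylor type assertion that the Perron--Bremermann envelope of a continuous obstacle on a hyperconvex domain is continuous up to the boundary. Once this is granted, the remaining verifications (plurisubharmonicity, the two inequalities, and monotone convergence) are routine.
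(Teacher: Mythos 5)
Your first half is correct and is essentially the paper's argument: the paper takes $\varphi_j\in\Eo\cap C(\bar\Omega)$ decreasing to $u$ (Theorem~\ref{maj_thm_appr}) and sets $v_j=\max(u,\varphi_j+H)$, while you take $\max(u,H+j\rho)$; both give a decreasing sequence in $\Eo(H)$ converging to $u$. (Note also that the boundary continuity of your final $u_j=\max(w_j,H+j\rho)$ would actually be automatic from the sandwich $H+j\rho\leq u_j\leq H$ with $\rho\to 0$ on $\partial\Omega$, so the real burden is continuity of $w_j$ \emph{inside} $\Omega$.)

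The second half has a genuine gap at the step you yourself flag and then grant: the continuity of the envelope $w_j=\bigl(\sup\{\phi\in\PSH{\Omega}:\phi\leq\min(v_j,H)\}\bigr)^*$. The Walsh--Bedford--Taylor continuity theorems for Perron--Bremermann envelopes of continuous obstacles require $B$-regularity (or strict pseudoconvexity) of the domain, not mere hyperconvexity; a hyperconvex domain such as the bidisc is not $B$-regular, its boundary contains analytic discs, and there are no boundary barriers forcing the envelope to track a continuous obstacle. Even interior continuity is problematic here: Walsh's translation argument glues the shifted competitor $\phi(\cdot-h)$ back to a global one using a function that is very negative near $\partial\Omega$, and the hyperconvex exhaustion $\rho$ tends to $0$ there, so the standard gluing fails. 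So this is not a quotable result in the paper's setting, and no proof is supplied. The paper's own argument is engineered precisely to avoid forming any global envelope: it takes smooth psh approximants $v_j$ on the strictly pseudoconvex exhaustion sets $\Omega_j$ with the quantitative bound $v_j\leq H+\tfrac{1}{2j}$ on $\Omega_{j+1}$, glues $\max(v_j-\tfrac1j,\,j\varphi+H)$ explicitly with $j\varphi+H$ outside $\Omega_j$ (continuity of the gluing being forced by $\varphi\geq-\tfrac{1}{2j^2}$ on $\comp{\Omega_j}$), and then repairs the resulting loss of monotonicity by passing to $u_j=\sup_{k\geq j}u_k^{\prime}$, shown continuous because it is simultaneously a supremum of continuous functions and a decreasing limit of continuous functions. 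To complete your proof you would either have to establish continuity of $P(\min(v_j,H))$ on hyperconvex domains (a nontrivial claim in its own right) or switch to a construction of this explicit type.
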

\begin{proof} Theorem~\ref{maj_thm_appr} implies that there
exists a decreasing sequence $[\varphi_{j}]$,
$\varphi_{j}\in\Eo \cap C(\bar\Omega)$, that converges pointwise
to $u$, as $j\to +\infty$. If $v_{j}=\max(u,\varphi_{j}+H)$, then
$[v_{j}]$, $v_{j}\in\Eo (H)$, is a decreasing sequence that
converges pointwise to $u$, as $j\to +\infty$, and the first
statement is completed.

For the second statement assume that $H\in\PSH{\Omega}\cap
C(\bar\Omega)$ and let $\varphi\in\Eo \cap C(\bar\Omega)$, not
identically $0$. Choose a fundamental sequence, $[\Omega_j]$ of
$\Omega$ such that for each $j\in\N$ we have that
$\varphi\geq-\frac{1}{2j^2}$ on $\comp{\Omega_{j}}$. Let
$[v_j]$, $v_j\in \PSH{\Omega_j}\cap C^{\infty}(\Omega_j)$, be a
decreasing sequence that converges pointwise to $u$, as $j\to
+\infty$, and $v_j\leq H+\frac{1}{2j}$ on $\Omega_{j+1}$. Set
\[
u^{\prime}_j=
\begin{cases}
\max\left(v_j-\frac1j\, , j\varphi +H\right) & \text{ on }
\Omega_j\\[2mm]
j\varphi +H & \text{ on } \comp{\Omega_{j}}\, .
\end{cases}
\]
Then $[u^{\prime}_j]$, $u^{\prime}_j\in\Eo(H)\cap C(\bar\Omega)$,
converges pointwise to $u$ on $\Omega$, as $j\to +\infty$, but
$[u^{\prime}_j]$ is not necessarily decreasing. Let
 $u_j=\sup_{k\geq j}u^{\prime}_k$. The construction of
 $u^{\prime}_j$ implies that
 \[
u^{\prime}_j +\frac1j\geq u^{\prime}_{j+1}+\frac{1}{j+1}
 \]
 and therefore for each $j\in\N$ fixed it follows that
\[
 \left[\max\left(u^{\prime}_j,u^{\prime}_{j+1},\ldots,u^{\prime}_{m-1},u^{\prime}_m+\frac1m\right)\right]_{m=j}^\infty
 \]
 decreases pointwise on $\Omega$ to $u_j$, as $m\to +\infty$.
 Thus, $u_j$ is an upper semicontinuous function and we
 have that $u_j\in\PSH{\Omega}\cap C(\bar\Omega)$. Moreover $[u_j]$ is decreasing and
 converges pointwise to $u$ on $\Omega$, as $j\to +\infty$.
\end{proof}
\begin{remark} If $H$ is unbounded, then each function $u_j$ is necessarily
unbounded.
\end{remark}

\section{Some auxiliary results}\label{sec_xing}

\begin{theorem}\label{xing_cor_1} Let $H\in\E$. If $u\in\Ker
(H)$ and $v\in\E$ is such that $v\leq H$ on $\Omega$, then for all
$w_j\in\PSH{\Omega}\cap L^{\infty}(\Omega)$, $-1\leq w_j\leq 0$,
$j=1,2,...,n$, we have the following inequality:
\begin{multline}\label{xing_ineq}
\frac{1}{n!}\int_{\{u<v\}}(v-u)^n dd^c w_1\wedge\cdots\wedge
dd^cw_n +\int_{\{u<v\}}(-w_1)\ddcn{v}
 \leq \\ \leq
 \int_{\{u<v\}}(-w_1)\ddcn{u}+\int_{\{u=v=-\infty\}}(-w_1)\ddcn{u}\,
 .
\end{multline}
\end{theorem}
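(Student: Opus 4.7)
The plan is to follow Xing's approach from~\cite{xing_cont}: first establish \eqref{xing_ineq} in the bounded setting via repeated integration by parts, then recover the general case by a monotone approximation based on Proposition~\ref{maj_prop_appr}. The pluripolar correction term $\int_{\{u=v=-\infty\}}(-w_{1})\ddcn{u}$ on the right-hand side arises only in the limiting step, absorbing the mass of $\ddcn{u}$ supported on the common polar set of $u$ and $v$.

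For the bounded case, assume first that $H$ is bounded, $u \in \Eo(H)$, and $v$ is a bounded plurisubharmonic function with $v \leq H$; in particular $\{u = v = -\infty\} = \emptyset$ and \eqref{xing_ineq} reduces to Xing's original inequality. Let $\phi := \max(u,v) - u$, so that $\phi \geq 0$ with $\phi = v-u$ on $\{u<v\}$ and $\phi = 0$ on $\{u \geq v\}$; the joint boundary behaviour of $u$ (which tends to $H$) and $v$ (bounded above by $H$) ensures that the subsequent applications of Stokes' theorem produce no boundary contributions. Expanding
\[
\ddcn{v} - \ddcn{u} \;=\; dd^{c}(v-u) \wedge \sum_{k=0}^{n-1} \ddck{v}{k} \wedge \ddck{u}{n-1-k},
\]
multiplying by $-w_{1}$, and iterating the integration by parts that successively transfers $dd^{c}$'s onto $w_{2},\ldots,w_{n}$ produces the quantitative term $\tfrac{1}{n!}(v-u)^{n} dd^{c}w_{1} \wedge \cdots \wedge dd^{c}w_{n}$. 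Positivity of all intermediate currents, together with $-w_{1} \geq 0$, then yields the bounded case of \eqref{xing_ineq}.

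For the general case, apply Proposition~\ref{maj_prop_appr} to obtain decreasing sequences $u_{j} \in \Eo(H)$ with $u_{j} \searrow u$ and bounded $v_{k} \leq H$ with $v_{k} \searrow v$; if $H$ is itself unbounded, truncate further by $\max(\cdot, -s)$. Apply the bounded inequality to these truncations and let $s, k, j \to \infty$ successively, invoking the monotone convergence theorems for the complex Monge-Amp\`{e}re operator on $\E$ from~\cite{cegrell_gdm}. The two integrals on the left of \eqref{xing_ineq} converge to their unbounded counterparts, and on the right the measures $\ddcn{u_{j}}$ converge weakly to $\ddcn{u}$ outside pluripolar sets.

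The main obstacle, and the source of the correction term, is the bookkeeping on the common polar set. Decompose
\[
\{u_{j} < v_{k}\} \;=\; \bigl(\{u_{j} < v_{k}\} \cap \{u > -\infty\}\bigr) \cup \bigl(\{u_{j} < v_{k}\} \cap \{u = -\infty\}\bigr).
\]
On $\{u > -\infty\}$ the characteristic functions $\chi_{\{u_{j} < v_{k}\}}$ converge to $\chi_{\{u < v\}}$ almost everywhere with respect to $\ddcn{u}$, producing the first term on the right of \eqref{xing_ineq}. On $\{u = -\infty\}$ the subset $\{v > -\infty\}$ is automatically contained in $\{u < v\}$ and contributes there, whereas $\{u = v = -\infty\}$ is \emph{not} contained in $\{u < v\}$ but is eventually captured by $\{u_{j} < v_{k}\}$; a Fatou-type argument on the measure $(-w_{1})\ddcn{u}$ over this set produces exactly the extra term $\int_{\{u = v = -\infty\}}(-w_{1})\ddcn{u}$. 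The uniform bound $0 \leq -w_{1} \leq 1$ keeps all integrals controlled throughout the limit.
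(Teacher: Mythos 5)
Your overall shape (prove a bounded Xing inequality by integration by parts, then pass to the limit along decreasing bounded approximants) is not the route the paper takes, and the limiting step --- which is where you have placed all the real difficulty --- contains genuine gaps. The paper does not re-approximate $u$ and $v$ by bounded functions at all: it applies the already-unbounded comparison inequality of Nguyen--Pham (Theorem~4.9 in~\cite{Khue_Hiep}) to the pair $u$ and $v-\varepsilon+\varphi^j$, where $\varphi\in\Ker$ witnesses $u\in\Ker(H)$ and $\varphi^j$ is the localization of Definition~\ref{maj_def_seq}; the hypothesis of that theorem holds because $u\geq\varphi^j+v-\varepsilon$ on $\comp{\Omega_{j}}$. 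The only limits are then the \emph{increasing} ones $\varphi^j\nearrow 0$ q.e.\ and $\varepsilon\to 0^+$, taken against the \emph{fixed} measures $dd^cw_1\wedge\cdots\wedge dd^cw_n$ and $\chi_{\{v>-\infty\}}\ddcn{v}$, both absolutely continuous with respect to $C_n$, so q.e.\ convergence suffices and monotone convergence does the rest; the correction term appears simply because $\bigcup_{\varepsilon>0}\{u\leq v-\varepsilon\}=\{u<v\}\cup\{u=v=-\infty\}$, with the fixed measure $\ddcn{u}$ on the right-hand side throughout.

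By contrast, in your scheme the measures $\ddcn{u_j}$ and $\ddcn{v_k}$ change with the approximation and converge only weak$^*$, while the sets $\{u_j<v_k\}$ vary non-monotonically in $(j,k)$; pairing weak$^*$-convergent Monge--Amp\`ere measures with indicators of such varying sets is exactly the content of the unbounded comparison principle and cannot be dispatched by citing ``monotone convergence theorems from~\cite{cegrell_gdm}''. Concretely: (i) $\chi_{\{u_j<v_k\}}$ need not converge to $\chi_{\{u<v\}}$ even $\ddcn{u}$-a.e.\ on $\{u>-\infty\}$ --- at points with $u=v$ finite the limit is undetermined --- and in any case the measure appearing on the right of your approximate inequality is $\ddcn{u_j}$, not $\ddcn{u}$; (ii) for the right-hand side you need the $\limsup_j$ of $\int_{\{u_j\leq v_k\}}(-w_1)\ddcn{u_j}$ to be bounded \emph{above} by the target, so a ``Fatou-type argument'' (which yields lower bounds on liminfs) points the wrong way, and you must also rule out mass accumulating on $\{u=v>-\infty\}$, a set excluded from the right-hand side of \eqref{xing_ineq}; this exclusion is normally achieved by the $\varepsilon$-shift $v\mapsto v-\varepsilon$, which your limiting procedure omits. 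Either carry out these estimates in detail or, as the paper does, quote the Nguyen--Pham inequality for unbounded arguments and reduce everything to increasing limits against capacity-absolutely-continuous measures.
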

\begin{proof} Let $u\in\Ker (H)$, i.e., $u\in\PSH{\Omega}$ and
there exists a function $\varphi\in\Ker$ such that
\[
H\geq u\geq \varphi + H\, .
\]
Let $[\Omega_j]$ be a fundamental sequence in $\Omega$ and let
$\varphi^j$ be defined as in Definition~\ref{maj_def_seq}. The assumption that $v\leq H$
implies that for $\varepsilon>0$ the following inequality holds
\[
u\geq \varphi + H =\varphi^j+H\geq
\varphi^j+v-\varepsilon\;\;\text{ on }\comp{\Omega_{j}}\, .
\]
Theorem~4.9 in~\cite{Khue_Hiep} implies that
\begin{multline*}
\frac{1}{n!}\int_{\{u<v-\varepsilon+\varphi^j\}}(v-\varepsilon+\varphi^j-u)^n
dd^c w_1\wedge\cdots\wedge dd^cw_n
+\int_{\{u<v-\varepsilon+\varphi^j\}}(-w_1)\ddcn{v}
 \leq \\ \leq
 \int_{\{u\leq v-\varepsilon\}}(-w_1)\ddcn{u}\,
 .
\end{multline*}
We have that
\begin{equation}\label{xing_cor_3}
[\chi_{\{u<v-\varepsilon+\varphi^j\}}(v-\varepsilon+\varphi^j-u)^n]_{j=1}^{\infty}\;\;\text{
and }\;\;[\chi_{\{u<v-\varepsilon+\varphi^j\}}]_{j=1}^{\infty}
\end{equation}
are two increasing sequences of functions that converges q.e. on
$\Omega$ to $\chi_{\{u<v-\varepsilon\}}(v-\varepsilon-u)^n$ and
$\chi_{\{u<v-\varepsilon\}}$, respectively, as $j\to +\infty$. Theorem~5.11
in~\cite{cegrell_gdm} implies that $dd^c w_1\wedge\cdots\wedge
dd^c w_n\ll C_n$ and $\chi_{\{v>-\infty\}}\ddcn{v}\ll C_n$. Here $C_n$
denotes the usual $C_{n}$-capacity and $\mu\ll C_n$ denotes that the measure
$\mu$ is absolutely continuous with
respect to $C_n$ (see e.g.~\cite{kolo_mem} for background). We therefore have that
$[\chi_{\{u<v-\varepsilon+\varphi^j\}}(v-\varepsilon+\varphi^j-u)^n]_{j=1}^{\infty}$
converges to $\chi_{\{u<v-\varepsilon\}}(v-\varepsilon-u)^n$ a.e.
w.r.t. $dd^c w_1\wedge\cdots\wedge w_n$ and that
$[\chi_{\{u<v-\varepsilon+\varphi^j\}}]_{j=1}^{\infty}$ converges
to $\chi_{\{u<v-\varepsilon\}}$ a.e. w.r.t.
$\chi_{\{v>-\infty\}}\ddcn{v}$. The monotone
convergence theorem yields that
\begin{multline*}
\frac{1}{n!}\int_{\{u<v-\varepsilon\}}(v-\varepsilon-u)^n dd^c
w_1\wedge\cdots\wedge dd^cw_n
+\int_{\{u<v-\varepsilon\}}(-w_1)\ddcn{v}
 \leq \\ \leq
 \int_{\{u\leq v-\varepsilon\}}(-w_1)\ddcn{u}\, .
\end{multline*}
Inequality~(\ref{xing_ineq}) is now obtain by letting $\varepsilon\to 0^+$.
\end{proof}
\begin{corollary}\label{xing_cor_CP} Let $u,v,H\in\E$ be such that
 $\ddcn{u}$ vanishes on all pluripolar sets in $\Omega$ and $\ddcn{u}\leq\ddcn{v}$. Consider the following two conditions
 \begin{enumerate}

 \item $\displaystyle\varliminf_{z\to\zeta} (u(z)-v(z))\geq 0$ for every
$\zeta\in\partial\Omega$,

\item $u\in\Ker (H)$, $v\leq H$.

 \end{enumerate}
 If one of the above conditions is satisfied, then $u\geq v$ on
 $\Omega$.
\end{corollary}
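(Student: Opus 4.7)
The plan is to establish, under either hypothesis (1) or (2), the bound
\begin{equation*}
\frac{1}{n!}\int_{\{u<v\}}(v-u)^n\,dd^cw_1\wedge\cdots\wedge dd^cw_n\leq 0
\end{equation*}
valid for every $w_1,\dots,w_n\in\PSH{\Omega}\cap L^\infty(\Omega)$ with $-1\leq w_j\leq 0$, and then to specialise the $w_j$ so that the Monge--Amp\`ere measure on the left becomes a positive multiple of Lebesgue measure.

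Under hypothesis (2), Theorem~\ref{xing_cor_1} applies directly and yields (\ref{xing_ineq}). Two observations collapse it to the displayed bound: first, the set $\{u=v=-\infty\}\subseteq\{u=-\infty\}$ is pluripolar and by assumption $\ddcn{u}$ puts no mass on pluripolar sets, so the last integral on the right-hand side of (\ref{xing_ineq}) vanishes; second, from $\ddcn{u}\leq\ddcn{v}$ and $-w_1\geq 0$ one has $\int_{\{u<v\}}(-w_1)\ddcn{u}\leq\int_{\{u<v\}}(-w_1)\ddcn{v}$, cancelling the second term on its left-hand side.

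Under hypothesis (1), Theorem~\ref{xing_cor_1} does not apply verbatim, but its proof adapts. The boundary condition together with the compactness of $\partial\Omega$ implies that $\{u\leq v-\varepsilon\}$ is relatively compact in $\Omega$ for every $\varepsilon>0$; fixing a fundamental sequence $[\Omega_j]$ with $\{u\leq v-\varepsilon\}\Subset\Omega_j$ for all $j$ large, one has $u\geq v-\varepsilon$ on $\comp{\Omega_j}$, which plays the r\^ole of the inequality $u\geq\varphi^j+v-\varepsilon$ used in the proof of Theorem~\ref{xing_cor_1} when $\varphi^j$ is taken identically zero. Theorem~4.9 of \cite{Khue_Hiep} then gives the bounded form of the inequality, and the monotone convergence step of that proof, with $\varepsilon\to 0^+$, produces (\ref{xing_ineq}) once more; the two observations above then yield the displayed bound.

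Next, choose $R>0$ with $\Omega\subset\{|z|<R\}$ and set $w_j(z)=(|z|^2-R^2)/R^2$, $j=1,\dots,n$. Then $w_j\in\PSH{\Omega}\cap L^\infty(\Omega)$, $-1\leq w_j\leq 0$, and $dd^cw_1\wedge\cdots\wedge dd^cw_n$ is a positive constant multiple of Euclidean volume. The displayed bound becomes $\int_{\{u<v\}}(v-u)^n\,d\lambda\leq 0$; since $(v-u)^n>0$ on $\{u<v\}$, the set $\{u<v\}$ is Lebesgue null. To conclude, suppose for contradiction that $u(z_0)<v(z_0)$ at some $z_0\in\Omega$ and pick $c\in(u(z_0),v(z_0))$: upper semicontinuity of $u$ gives a ball $B\ni z_0$ on which $u<c$, while the sub-mean value property of the plurisubharmonic function $v$ together with $v(z_0)>c$ forces $\{v>c\}\cap B$ to have positive Lebesgue measure, contradicting that $\{u<v\}$ is Lebesgue null. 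Hence $u\geq v$ on $\Omega$.

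The main obstacle, in my view, is case (1): one must verify that the proof of Theorem~\ref{xing_cor_1} indeed produces (\ref{xing_ineq}) when $\varphi^j\equiv 0$, and in particular that the asymmetric monotone convergence in $\varepsilon$ which generates the pluripolar term $\int_{\{u=v=-\infty\}}(-w_1)\ddcn{u}$ on the right-hand side still runs, so that the subsequent cancellation by the vanishing of $\ddcn{u}$ on pluripolar sets remains available.
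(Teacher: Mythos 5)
Your overall strategy---a Xing-type inequality, elimination of the singular term using the hypothesis that $\ddcn{u}$ charges no pluripolar set, and then a specialisation of the $w_j$'s turning the left-hand side into a multiple of Lebesgue measure---is essentially the paper's route: the paper phrases the final step through the capacity $C_n$ (the supremum over all admissible $w$) rather than Lebesgue measure, but your sub-mean-value argument reaches the same conclusion. Your handling of case (1), by rerunning the proof of Theorem~\ref{xing_cor_1} with $\varphi^j\equiv 0$ after observing that $\{u\leq v-\varepsilon\}\Subset\Omega$, is likewise what the paper does by invoking Theorem~4.9 of \cite{Khue_Hiep} directly.

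The genuine gap is the cancellation step. From~(\ref{xing_ineq}) with the pluripolar term removed you have
\[
\frac{1}{n!}\int_{\{u<v\}}(v-u)^n\,dd^cw_1\wedge\cdots\wedge dd^cw_n+\int_{\{u<v\}}(-w_1)\ddcn{v}\leq\int_{\{u<v\}}(-w_1)\ddcn{u}\leq\int_{\{u<v\}}(-w_1)\ddcn{v}\, ,
\]
and you subtract $\int_{\{u<v\}}(-w_1)\ddcn{v}$ from both ends. That is legitimate only if this quantity is finite, and nothing in the hypotheses guarantees it: functions in $\E$, and even in $\Ker(H)$, may have infinite total Monge--Amp\`ere mass (this is exactly why $\F$ is a proper subclass of $\Ker$); your choice $w_1=(|z|^2-R^2)/R^2$ satisfies $-w_1>0$ on all of $\Omega$; and $\{u<v\}$ is precisely the set you are trying to show is negligible, so it cannot be assumed relatively compact. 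If both integrals are $+\infty$, the displayed chain says nothing about the first term. The remedy---which is what the paper's estimate~(\ref{xing_cor_CP_1}) implements---is to perform the cancellation \emph{before} the limits $\varepsilon\to0^+$ (and, in case (2), before $j\to+\infty$), working on the sets $\{u+\varepsilon<v\}$, respectively $\{u+\varepsilon<v+\varphi^j\}$, which are relatively compact in $\Omega$ under condition (1), respectively contained in $\Omega_j$ under condition (2). There all integrals are finite because the Monge--Amp\`ere measures are locally finite Radon measures, the domination $\ddcn{u}\leq\ddcn{v}$ is applied over one and the same open set, and only afterwards does one pass to the limit. With that rearrangement (and otherwise unchanged final step) your argument goes through.
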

\begin{proof} Assume that $u,v\in\E$ are such that
 $\ddcn{u}$ vanishes on all pluripolar sets in $\Omega$ and
 $\ddcn{u}\leq\ddcn{v}$.

\bigskip

(1): Moreover, assume that
\[
\varliminf_{z\to\zeta} (u(z)-v(z))\geq 0
\]
for every $\zeta\in\partial\Omega$. Let $\varepsilon > 0$.
Theorem~4.9 in~\cite{Khue_Hiep} implies that
\begin{multline}\label{xing_cor_CP_1}
\frac{\varepsilon^n}{n!} C_n(\{u+2\varepsilon<v\})\\ \leq
\sup\left\{\frac{1}{n!}\int_{\{u+2\varepsilon<v\}}(v-u-2\varepsilon)^n\ddcn{w}:
w\in\PSH{\Omega},\; 0\leq w\leq 1\right\}\\
\leq\sup\left\{\frac{1}{n!}\int_{\{u+\varepsilon<v\}}(v-u-\varepsilon)^n\ddcn{w}:
w\in\PSH{\Omega},\; 0\leq w\leq 1\right\}\\
\leq
\frac{1}{n!}\int_{\{u+\varepsilon<v\}}(-w)[\ddcn{u}-\ddcn{v}]\leq
0\, .
\end{multline}
Thus, $u+2\varepsilon\geq v$. Let $\varepsilon\to 0^+$, then
$u\geq v$ on $\Omega$.

\bigskip

(2): In this case assume that $u\in \Ker (H)$ and $v\leq H$. Since
$u\in\Ker (H)$ there exists a function $\varphi\in\Ker$ such that
$H+\varphi\leq u\leq H$. Let $\varphi^j$ be defined as in
Definition~\ref{maj_def_seq} and let $\varepsilon >0$. Similarly
as in~(\ref{xing_cor_CP_1}) we get that $u+2\varepsilon\geq
v+\varphi^j$. Let $\varepsilon\to 0^+$. Hence $u\geq v$ on
$\Omega$.
\end{proof}
\begin{remark} In Corollary~\ref{xing_cor_CP}, the assumption that $\ddcn{u}$ vanishes on
all pluripolar sets is essential.
\end{remark}
\begin{lemma}\label{DP_lem_ineq}
 Let $u,v\in\Ker(H)$, be such that $u\leq v$ and
 $\int_{\Omega} (-\varphi)dd^c u\wedge T<+\infty$, $\varphi\in\PSH{\Omega}$, $\varphi\leq
0$. Then the following inequality holds
\begin{equation}\label{DP_lem_ineq_1}
\int_{\Omega} (-\varphi)dd^c u\wedge T\geq\int_{\Omega}
(-\varphi)dd^c v\wedge T\, ,
\end{equation}
where $T=dd^c w_2\wedge\cdots\wedge dd^c w_n$,
$w_2,\ldots,w_n\in\E$.
\end{lemma}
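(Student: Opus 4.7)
My plan is to establish the inequality via integration by parts after a triple approximation reducing to the bounded case. The target inequality is equivalent to
\[
\int_{\Omega}(-\varphi)\,dd^c(v-u)\wedge T \leq 0,
\]
and once everything is bounded and integration by parts is legitimate, this will rewrite as $-\int_{\Omega}(v-u)\,dd^c\varphi\wedge T \leq 0$, which is automatic since $v-u\geq 0$ and $dd^c\varphi\wedge T\geq 0$ as a positive current.

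First I would approximate the weight: by Theorem~\ref{maj_thm_appr}, take $\varphi_k\in\Eo\cap C(\bar\Omega)$ decreasing pointwise to $\varphi$; the assumption $\int_{\Omega}(-\varphi)\,dd^c u\wedge T<+\infty$ together with monotone convergence will let me recover $\varphi$ from the $\varphi_k$ at the end. Next, I would approximate each $w_i$ by $w_{i,m}\in\Eo\cap C(\bar\Omega)$ decreasing to $w_i$, set $T_m=dd^c w_{2,m}\wedge\cdots\wedge dd^c w_{n,m}$, and exploit the weak continuity of mixed Monge-Amp\`ere measures under decreasing sequences in $\E$ (Cegrell), combined with continuity of the intermediate weight $-\varphi_k$, to pass $m\to\infty$ at the end. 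Finally, by Proposition~\ref{maj_prop_appr}, take decreasing $u_j,v_j\in\Eo(H)$ with $u_j\searrow u$, $v_j\searrow v$, and replace $v_j$ by $\max(u_j,v_j)\in\Eo(H)$ to preserve $u_j\leq v_j$; this $\max$ still decreases to $v$ since $u\leq v$.

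In the resulting bounded case, $h_j:=v_j-u_j\geq 0$ is bounded and tends to $0$ on $\partial\Omega$ (both $u_j,v_j\to H$ there), while $\varphi_k\in\Eo$ itself vanishes on $\partial\Omega$. I would then invoke the Cegrell integration by parts formula on the pairs $(\varphi_k,u_j)$ and $(\varphi_k,v_j)$, whose Stokes boundary contributions vanish thanks to $\varphi_k|_{\partial\Omega}=0$, to obtain
\[
\int_{\Omega}\varphi_k\,dd^c(v_j-u_j)\wedge T_m \;=\; \int_{\Omega}(v_j-u_j)\,dd^c\varphi_k\wedge T_m \;\geq\; 0,
\]
so that $\int_{\Omega}(-\varphi_k)\,dd^c v_j\wedge T_m \leq \int_{\Omega}(-\varphi_k)\,dd^c u_j\wedge T_m$. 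Taking limits $j\to\infty$, $m\to\infty$, $k\to\infty$ in that order delivers the inequality. The hard part will be the integration by parts step in this adapted setting, since the classical Cegrell formula is stated for pairs in $\Eo$, whereas here $u_j,v_j$ only lie in $\Eo(H)$; justifying it requires a Stokes-theorem argument on an exhausting sequence $\Omega_l\Subset\Omega$ and checking that the boundary contributions over $\partial\Omega_l$ vanish as $l\to\infty$, exploiting $\varphi_k|_{\partial\Omega}=0$, the boundedness of $h_j$, and the weak continuity of the currents $dd^c\varphi_k\wedge T_m$.
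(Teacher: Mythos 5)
Your overall strategy -- integrate by parts twice to move $dd^c$ onto the weight and use $v-u\geq 0$ together with the positivity of $dd^c\varphi\wedge T$ -- is the same mechanism the paper uses, but your approximation scheme creates two genuine gaps that the paper's proof is specifically engineered to avoid. The first is the boundary-term analysis you yourself flag as ``the hard part.'' Showing that $\int_{\partial\Omega_l}\varphi_k\,d^c h_j\wedge T_m$ and $\int_{\partial\Omega_l}h_j\,d^c\varphi_k\wedge T_m$ tend to $0$ is not a consequence of $\varphi_k|_{\partial\Omega}=0$ and the boundedness of $h_j$: the other factor in each product is a derivative current whose mass on $\partial\Omega_l$ is not controlled by any sup-norm, and the usual energy estimates that tame it are available for $\Eo$ or $\F$ but not for $\Eo(H)$ when $H\in\E$ is unbounded with possibly infinite total Monge--Amp\`ere mass. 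The paper sidesteps this entirely: instead of approximating $v$ from above, it sets $v_j=\max(u,\psi^j+v)$ (with $\psi\in\Ker$ witnessing $u\in\Ker(H)$ and $\psi^j$ as in Definition~\ref{maj_def_seq}), so that $v_j=u$ on $\comp{\Omega_j}$ and the difference $v_j-u$ has compact support; Stokes on $\Omega_s$, $s\geq j$, then has literally no boundary contribution, and $u$ itself -- not an approximant -- appears in the resulting identity.

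The second gap is the limit $j\to+\infty$ on the right-hand side. After your bounded-case inequality you must pass from $\int_\Omega(-\varphi_k)\,dd^c u_j\wedge T_m$ to $\int_\Omega(-\varphi_k)\,dd^c u\wedge T_m$ for a \emph{decreasing} sequence $u_j\searrow u$. Weak$^*$ convergence of $dd^c u_j\wedge T_m$ only gives $\liminf_j\int f\,d\mu_j\geq\int f\,d\mu$ for the non-negative, non-compactly-supported weight $f=-\varphi_k$; the inequality you need, $\lim_j\int(-\varphi_k)\,dd^c u_j\wedge T_m\leq\int(-\varphi_k)\,dd^c u\wedge T_m$, is exactly the statement of the lemma applied to the pair $u\leq u_j$, so invoking it is circular (this monotonicity is precisely what Corollary~\ref{DP_cor_conv} later deduces \emph{from} this lemma), and the usual tightness argument fails absent a uniform bound on $\int_\Omega dd^c u_j\wedge T_m$, which need not hold in $\Eo(H)$. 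In the paper's proof no limit is ever taken on the $u$-side, and on the $v$-side the sequence $v_j$ increases to $v$ q.e., so the lower-semicontinuity direction of weak convergence (inequality~(\ref{DP_lem_ineq_3})) is exactly the direction needed. To repair your argument you would essentially have to adopt the paper's device of making $v_j-u$ compactly supported.
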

\begin{proof} Let $[\Omega_s]$ be a fundamental
sequence in $\Omega$. By the assumption that $u\in\Ker (H)$ there
exists a function $\psi\in\Ker$ such that $H\geq u\geq\psi + H$.
For each $j\in\N$ consider the  function defined by $v_j=\max
(u,\psi^j+v)$, where $\psi^j$ is defined as in
Definition~\ref{maj_def_seq}. This construction imply that
$v_j\in\E$, $v_j=u$ on $\comp\Omega_j$, $u\leq v_j$,  and $[v_j]$ is
an increasing sequence that converges pointwise to $v$ q.e. on
$\Omega$, as $j\to +\infty$. Theorem~\ref{maj_thm_appr} implies
that there exists a decreasing sequence $[\varphi_k]$,
$\varphi_k\in\Eo\cap C(\bar\Omega)$, that converges
pointwise to $\varphi$, as $j\to +\infty$. We have by Stokes'
theorem that for each $s\geq j$ it holds that
\[
\int_{\Omega_s}(-\varphi_k)dd^c u\wedge T
-\int_{\Omega_s}(-\varphi_k)dd^c v_j\wedge
T=\int_{\Omega_s}(v_j-u)dd^c \varphi_k\wedge T\geq 0\, .
\]
By letting $s\to +\infty$ we get that
\begin{equation}\label{DP_lem_ineq_2}
\int_{\Omega} (-\varphi_k)dd^c u\wedge T\geq\int_{\Omega}
(-\varphi_k)dd^c v_j\wedge T\, .
\end{equation}
The function $\varphi_k$ is bounded and therefore it follows
from~\cite[remark on pg.~175]{cegrell_gdm} that $(-\varphi_k)dd^c
v_j\wedge T$ converges to $(-\varphi_k)dd^c v\wedge T$ in the
weak$^*$-topology, as $j\to +\infty$, which yields that
\begin{equation}\label{DP_lem_ineq_3}
\lim_{j\to +\infty}\int_{\Omega}(-\varphi_k)dd^c v_j\wedge T\geq
\int_{\Omega}(-\varphi_k)dd^c v\wedge T\, .
\end{equation}
Inequality~(\ref{DP_lem_ineq_2}) and~(\ref{DP_lem_ineq_3}) imply
that inequality~(\ref{DP_lem_ineq_1}) holds for $\varphi_k$ and
the monotone convergence theorem completes this proof, when we let
$k\to +\infty$.
\end{proof}
%
%
%

%
%
\begin{corollary}\label{DP_cor_conv}
Let $H\in\E$ and $\varphi\in\PSH{\Omega}$, $\varphi\leq 0$. If
$\,[u_{j}]$, $u_{j}\in\Ker (H)$, is a decreasing sequence that
converges pointwise on $\Omega$ to a function $u\in\Ker (H)$ as
$j$ tends to $+\infty$, then
\begin{equation}\label{DP_cor_conv_1}
\lim_{j\rightarrow +\infty}\int_{\Omega}(-\varphi)\ddcn{u_{j}}=
\int_{\Omega}(-\varphi)\ddcn{u}\, .
\end{equation}
\end{corollary}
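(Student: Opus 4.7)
The plan is to sandwich the sequence $\{\int_{\Omega}(-\varphi)\ddcn{u_j}\}$ between a $\limsup$-bound obtained by iterating Lemma~\ref{DP_lem_ineq}, and a $\liminf$-bound obtained by approximating $\varphi$ by bounded continuous functions and using weak$^*$-convergence of Monge--Amp\`ere measures along decreasing sequences in $\E$.

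For the $\limsup$-bound, note that $u\leq u_j$ with both functions in $\Ker(H)$. Apply Lemma~\ref{DP_lem_ineq} first with $T=\ddck{u}{n-1}$:
\[
\int_{\Omega}(-\varphi)\ddcn{u}\;\geq\;\int_{\Omega}(-\varphi)\,dd^c u_j\wedge \ddck{u}{n-1}\,,
\]
then successively replace each remaining factor $dd^c u$ by $dd^c u_j$, using the lemma at each step with the wedge of the accumulated currents playing the role of $T$ (this is legal since $\Ker(H)\subset\E$, so every factor belongs to $\E$ as required). At any stage where the left-hand integral is $+\infty$ the inequality is trivial, and otherwise the finiteness hypothesis of the lemma is met. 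After $n$ steps one obtains
\[
\int_{\Omega}(-\varphi)\ddcn{u_j}\;\leq\;\int_{\Omega}(-\varphi)\ddcn{u}\qquad\text{for every }j,
\]
hence $\limsup_{j}\int_{\Omega}(-\varphi)\ddcn{u_j}\leq\int_{\Omega}(-\varphi)\ddcn{u}$.

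For the reverse inequality, use Theorem~\ref{maj_thm_appr} to pick a decreasing sequence $[\varphi_k]$, $\varphi_k\in\Eo\cap C(\bar\Omega)$, with $\varphi_k\downarrow\varphi$. Since $u_j\downarrow u$ in $\E$, the standard continuity of Cegrell's Monge--Amp\`ere operator along decreasing sequences (see~\cite{cegrell_gdm}) gives $\ddcn{u_j}\to\ddcn{u}$ in the weak$^*$-topology. Because each $\varphi_k$ is bounded and continuous on $\bar\Omega$, this yields, for every fixed $k$,
\[
\lim_{j\to+\infty}\int_{\Omega}(-\varphi_k)\ddcn{u_j}\;=\;\int_{\Omega}(-\varphi_k)\ddcn{u}\,.
\]
From $-\varphi_k\leq -\varphi$ it follows that $\liminf_{j}\int_{\Omega}(-\varphi)\ddcn{u_j}\geq \int_{\Omega}(-\varphi_k)\ddcn{u}$, and letting $k\to+\infty$ and invoking the monotone convergence theorem against the fixed positive measure $\ddcn{u}$ delivers $\liminf_{j}\int_{\Omega}(-\varphi)\ddcn{u_j}\geq \int_{\Omega}(-\varphi)\ddcn{u}$. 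Combined with the $\limsup$-bound, this proves~(\ref{DP_cor_conv_1}), including the degenerate case where both sides equal $+\infty$.

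The main obstacle is the bookkeeping in the iteration of Lemma~\ref{DP_lem_ineq}: at each step one must ensure that the intermediate mixed integrals satisfy the finiteness assumption of the lemma (or fall into the trivial $+\infty\geq\,\cdot\,$ case), and that the freshly introduced factors $dd^c u_j$ are admissible within $T$; once this is handled, the weak$^*$ passage to the limit against the bounded continuous test function $-\varphi_k$ is routine.
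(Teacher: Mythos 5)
Your argument is correct. The upper bound is exactly the paper's: both proofs iterate Lemma~\ref{DP_lem_ineq} to show that $[\int_\Omega(-\varphi)\ddcn{u_j}]$ is increasing and dominated by $\int_\Omega(-\varphi)\ddcn{u}$ (the paper leaves the factor-by-factor replacement implicit; your bookkeeping is the right way to make it explicit, since by commutativity of the wedge product the conclusion of each step is precisely the finiteness hypothesis of the next once $\int_\Omega(-\varphi)\ddcn{u}<+\infty$, and the final inequality is vacuous otherwise). For the lower bound the paper cites Corollary~5.2 of \cite{cegrell_gdm}, which gives weak$^*$ convergence of the weighted measures $(-\varphi)\ddcn{u_j}$ to $(-\varphi)\ddcn{u}$, and then concludes by lower semicontinuity of the total mass together with the upper bound; you instead use only the unweighted convergence $\ddcn{u_j}\to\ddcn{u}$ and reintroduce the weight by approximating $\varphi$ from above by $\varphi_k\in\Eo\cap C(\bar\Omega)$ and passing to the limit in $k$ by monotone convergence. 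This is a perfectly acceptable, somewhat more self-contained substitute. One small caveat: since $-\varphi_k$ is not compactly supported and the masses $\int_\Omega\ddcn{u_j}$ need not be uniformly bounded for functions in $\Ker(H)$, the equality $\lim_j\int_\Omega(-\varphi_k)\ddcn{u_j}=\int_\Omega(-\varphi_k)\ddcn{u}$ you assert is not automatic from weak$^*$ convergence alone; however the inequality $\liminf_j\int_\Omega(-\varphi_k)\ddcn{u_j}\geq\int_\Omega(-\varphi_k)\ddcn{u}$ is (test against compactly supported continuous minorants of the non-negative continuous function $-\varphi_k$), and that one-sided estimate is all your subsequent chain of inequalities actually uses, so the proof stands.
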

\begin{proof} Let $\varphi\in\PSH{\Omega}$, $\varphi\leq 0$, and let $u_j,u\in\Ker (H)$ be such
that $u\leq u_j$. If $\int_{\Omega}(-\varphi)\ddcn{u}=+\infty$,
then~(\ref{DP_cor_conv_1}) follows immediately and therefore we
can assume that $\int_{\Omega}(-\varphi)\ddcn{u}<+\infty$.
Lemma~\ref{DP_lem_ineq} implies that
$[\int_{\Omega}(-\varphi)\,\ddcn{u_{j}}]$ is an increasing
sequence that is bounded above by $\int_{\Omega}(-\varphi)\,\ddcn{u}$. From
Corollary~5.2 in~\cite{cegrell_gdm} it follows that the
sequence $[(-\varphi)\ddcn{u_j}]$ converges to
$(-\varphi)\ddcn{u}$ in the weak$^*$-topology, as $j\to +\infty$,
and the desired limit of the total masses is valid.
\end{proof}
%
%
%

%
%
\begin{lemma}\label{ID_lem}  Let $H\in\E$ and let $u,v\in\Ker (H)$ be such
that $u\leq v$. Then for all $w_j\in\PSH{\Omega}\cap
L^{\infty}(\Omega)$, $-1\leq w_j\leq 0$, $j=1,2,...,n$, $\int_\Omega
(-w_1)\ddcn{u}<+\infty$, we have that the following inequality
holds
\begin{multline}\label{ID_thm_1}
\frac{1}{n!}\int_{\Omega}(v-u)^n dd^c w_1\wedge\cdots\wedge
dd^cw_n +\int_{\Omega}(-w_1)\ddcn{v}
 \leq \\ \leq
 \int_{\Omega}(-w_1)\ddcn{u}\, .
\end{multline}
\end{lemma}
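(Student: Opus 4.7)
My plan is to deduce the inequality by combining a bounded-case version with a monotone approximation, then taking a liminf.

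First I would use Proposition~\ref{maj_prop_appr} twice to obtain decreasing sequences $[u_k]$, $[v_k]$ in $\Eo(H)\subset\Ker(H)$ with $u_k\downarrow u$ and $v_k\downarrow v$ pointwise on $\Omega$. To preserve the ordering $u\le v$ at each step, I would replace $v_k$ by $\hat v_k:=\max(u_k,v_k)\in\Eo(H)$, so that $u_k\le\hat v_k$ on $\Omega$ and, since $u\le v$, still $\hat v_k\downarrow\max(u,v)=v$. Both $u_k$ and $\hat v_k$ are bounded and lie in $\Eo(H)$, hence in $\Ker(H)$, with the same boundary behavior as $H$.

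Next I would prove the inequality in the bounded setting $u_k,\hat v_k\in\Eo(H)$. In this regime both functions are bounded and differ from $H$ by elements of $\Eo$, so Stokes' theorem and iterated integration by parts apply without boundary obstructions. Telescoping
\[
\ddcn{u_k}-\ddcn{\hat v_k}=(dd^c u_k-dd^c\hat v_k)\wedge\sum_{j=0}^{n-1}\ddck{u_k}{j}\wedge\ddck{\hat v_k}{n-1-j}
\]
and integrating by parts to transfer $dd^c$ operations onto the difference $u_k-\hat v_k$ and onto the test functions $w_1,\dots,w_n$, one controls the resulting cross-terms by positivity of $d(\hat v_k-u_k)\wedge d^c(\hat v_k-u_k)$ together with $-w_i\ge 0$. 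This produces the classical bounded Xing estimate
\[
\frac{1}{n!}\int_\Omega(\hat v_k-u_k)^n dd^c w_1\wedge\cdots\wedge dd^c w_n+\int_\Omega(-w_1)\ddcn{\hat v_k}\le\int_\Omega(-w_1)\ddcn{u_k}.
\]

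Finally I would pass to the limit $k\to+\infty$. By Corollary~\ref{DP_cor_conv} applied to the decreasing sequences $u_k\downarrow u$ and $\hat v_k\downarrow v$ in $\Ker(H)$,
\[
\int_\Omega(-w_1)\ddcn{u_k}\longrightarrow\int_\Omega(-w_1)\ddcn{u},\qquad\int_\Omega(-w_1)\ddcn{\hat v_k}\longrightarrow\int_\Omega(-w_1)\ddcn{v}.
\]
The pointwise convergence $\hat v_k-u_k\to v-u$ combined with Fatou's lemma applied to the non-negative measure $dd^c w_1\wedge\cdots\wedge dd^c w_n$ gives
\[
\liminf_{k\to+\infty}\int_\Omega(\hat v_k-u_k)^n\, dd^c w_1\wedge\cdots\wedge dd^c w_n\ge\int_\Omega(v-u)^n\, dd^c w_1\wedge\cdots\wedge dd^c w_n,
\]
and taking the liminf in the bounded-case inequality yields the statement.

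The main obstacle will be the bounded case. Theorem~\ref{xing_cor_1} on its own only produces the inequality restricted to $\{u_k<\hat v_k\}$, and although the residual term $\int_{\{u_k=\hat v_k=-\infty\}}(-w_1)\ddcn{u_k}$ vanishes in the bounded setting, the extension from $\{u_k<\hat v_k\}$ to $\Omega$ is not automatic: $\ddcn{\hat v_k}$ can carry additional mass on $\{u_k=\hat v_k\}$ coming from the "kink" where $\hat v_k=\max(u_k,v_k)$ transitions between $u_k$ and $v_k$. The Stokes-based derivation sketched above is needed to absorb this contribution and obtain the inequality globally on $\Omega$ in the bounded regime.
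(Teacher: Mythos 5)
Your overall architecture is the same as the paper's: establish the inequality first for functions in $\Eo(H)$, then approximate $u$ and $v$ from above by decreasing sequences in $\Eo(H)$ (Proposition~\ref{maj_prop_appr}) and pass to the limit via Corollary~\ref{DP_cor_conv}. The limit step as you describe it is fine: $\int_\Omega(-w_1)\ddcn{u_k}$ and $\int_\Omega(-w_1)\ddcn{\hat v_k}$ converge to the correct limits by Corollary~\ref{DP_cor_conv} (both limits being finite thanks to Lemma~\ref{DP_lem_ineq} and the hypothesis $\int_\Omega(-w_1)\ddcn{u}<+\infty$), and Fatou applies to the first term because $dd^cw_1\wedge\cdots\wedge dd^cw_n\ll C_n$ (Theorem~5.11 in~\cite{cegrell_gdm}), so the pluripolar set $\{u=-\infty\}$, where $\hat v_k-u_k$ need not converge to $v-u$, is negligible for that measure.

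The gap is exactly where you locate the main obstacle: the bounded case is asserted, not proved. Your telescoping-plus-Stokes sketch is the entire content of a global Xing-type estimate for $u_k\le\hat v_k$ in $\Eo(H)$, and, as you note yourself, neither Theorem~\ref{xing_cor_1} nor Xing's original lemma delivers it, since both control only the mass on $\{u_k<\hat v_k\}$ while $\ddcn{\hat v_k}$ may charge the contact set $\{u_k=\hat v_k\}$. The paper's device for dispatching this is the $\varepsilon$-shift: since $u\in\Eo(H)$ there is $\varphi\in\Eo$ with $u\ge\varphi+H$, so for each $\varepsilon>0$ one can choose $K\Subset\Omega$ with $\varphi\ge-\varepsilon$ on $\comp{K}$, whence $u\ge H-\varepsilon\ge v-\varepsilon$ and therefore $\max(u,v-\varepsilon)=u$ on $\comp{K}$. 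For two functions that coincide outside a compact set, Proposition~3.1 of~\cite{Khue_Hiep} gives exactly the desired inequality over all of $\Omega$, with no boundary terms and no contact-set correction, and letting $\varepsilon\to0^+$ recovers the statement for $v$ itself. If you instead insist on proving the bounded estimate from scratch, you must either carry out the integration-by-parts argument in full (justifying the absence of boundary contributions from the decay $|u_k-\hat v_k|\le-\varphi$ near $\partial\Omega$) or prove separately that $\chi_{\{u_k=\hat v_k\}}\ddcn{\hat v_k}\le\chi_{\{u_k=\hat v_k\}}\ddcn{u_k}$; as written, neither is done.
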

\begin{proof} First we assume that $u,v\in\Eo (H)$. By definition
there exists a function $\varphi\in\Eo$ such that $H\geq u\geq
\varphi +H$. For each $\varepsilon>0$ small enough choose
$K\Subset\Omega$ such that $\varphi \geq - \varepsilon$ on
$\comp{K}$. Hence,
\[
u\geq \varphi + H\geq -\epsilon +H \geq -\epsilon
+v\;\;\text{on}\;\; \comp{K}\, ,
\]
and therefore it follows that $\max (u , v- \epsilon ) =u$ on
$\comp{K}$. By using Proposition~3.1
in~\cite{Khue_Hiep} we get that
\begin{multline*}
\frac{1}{n!}\int_\Omega (\max(u,v-\varepsilon)-u)^n dd^c
w_1\wedge\cdots\wedge dd^cw_n + \int_\Omega (-w_1) (dd^c
\max(u,v-\varepsilon))^n\\ \leq \int_\Omega (-w_1) (dd^c u)^n\, .
\end{multline*}
By letting $\varepsilon\to 0^+$ we obtain
inequality~(\ref{ID_thm_1}) in the case when $u,v\in\Eo (H)$. Using the case when $u,v\in\Eo (H)$ together with
Proposition~\ref{maj_prop_appr} and
Corollary~\ref{DP_cor_conv} we complete the proof.
\end{proof}

An immediate consequence of Lemma~\ref{ID_lem} is the following
identity principle. Theorem~\ref{ID_thm} play a technical
prominent role in Section~\ref{sec_DP}. In particular, this generalizes
for example Lemma~6.3 in~\cite{Rashkovskii}, Theorem~3.15 in~\cite{cegrell_gdm},
and the corresponding result in~\cite{Khue_Hiep}.

\begin{theorem}\label{ID_thm} Let $H\in\E$. If $u,v\in\Ker (H)$ is such that $u\leq
v$, $\ddcn{u}=\ddcn{v}$ and $\int_{\Omega}(-w)\ddcn{u}<+\infty$
for some $w\in\E$ which is not identically $0$, then $u=v$ on
$\Omega$.
\end{theorem}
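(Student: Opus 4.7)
The plan is to apply Lemma~\ref{ID_lem} to a bounded truncation of $w$, use $\ddcn{u}=\ddcn{v}$ to force a non-negative integral to vanish, and from this deduce that the open set $\{v>u\}$ is empty.

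Observe first that $\{v>u\}$ is open: since $u$ and $v$ are upper semicontinuous,
\[
\{v>u\}=\bigcup_{c\in\mathbb{Q}}\big(\{v>c\}\cap\{u<c\}\big).
\]
is a union of open sets. Hence it suffices to show this set is pluripolar, as an open pluripolar set is necessarily empty.

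For each integer $M\geq 1$, put $w^{(M)}:=\frac{1}{M}\max(w,-M)\in\PSH{\Omega}\cap L^{\infty}(\Omega)$; then $-1\leq w^{(M)}\leq 0$ and the pointwise bound $-w^{(M)}\leq -w/M$ gives
\[
\int_{\Omega}(-w^{(M)})\ddcn{u}\leq\frac{1}{M}\int_{\Omega}(-w)\ddcn{u}<+\infty.
\]
I then invoke Lemma~\ref{ID_lem} with $w_{1}=\cdots=w_{n}=w^{(M)}$. The integrals against $\ddcn{u}$ and $\ddcn{v}$ appearing in inequality~(\ref{ID_thm_1}) coincide by the hypothesis $\ddcn{u}=\ddcn{v}$ and are finite, so they cancel and leave
\[
\frac{1}{n!}\int_{\Omega}(v-u)^{n}\ddcn{w^{(M)}}\leq 0.
\]
The integrand is non-negative, so the measure $(v-u)^{n}\ddcn{w^{(M)}}$ is identically zero on $\Omega$, which forces $\ddcn{w^{(M)}}(\{v>u\})=0$ since $(v-u)^{n}>0$ on this open set. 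Repeating the argument with $w_{2},\ldots,w_{n}$ allowed to be arbitrary bounded plurisubharmonic functions in $[-1,0]$ yields the same vanishing for every mixed mass $dd^{c}w^{(M)}\wedge dd^{c}w_{2}\wedge\cdots\wedge dd^{c}w_{n}$ on $\{v>u\}$.

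The principal obstacle is to promote this mixed-mass vanishing to $C_{n}(\{v>u\})=0$. On the open set $\{w>-M\}$ one has $w^{(M)}=w/M$ and thus $dd^{c}w^{(M)}=M^{-1}dd^{c}w$, so letting $M\to+\infty$ transfers the vanishing to the non-pluripolar current $\chi_{\{w>-\infty\}}dd^{c}w$ wedged against any bounded test currents. Combining this with Theorem~5.11 of~\cite{cegrell_gdm} (which says $\chi_{\{w>-\infty\}}\ddcn{w}\ll C_{n}$), the non-triviality hypothesis $w\not\equiv 0$, and a Chern--Levine--Nirenberg type capacity estimate, one concludes that $\{v>u\}$ has vanishing $C_{n}$-capacity, hence is pluripolar. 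Being also open, the set must be empty, and combined with $u\leq v$ this gives $u=v$ on $\Omega$.
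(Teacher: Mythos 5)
Your opening moves are fine and are exactly how Lemma~\ref{ID_lem} is meant to be used: the truncation $w^{(M)}=\frac1M\max(w,-M)$ satisfies $-1\le w^{(M)}\le 0$ and $-w^{(M)}\le -w/M$, so the finiteness hypothesis of the lemma holds, and since $\ddcn{u}=\ddcn{v}$ the two (finite) terms cancel, giving $\int_\Omega(v-u)^n\,dd^cw^{(M)}\wedge dd^cw_2\wedge\cdots\wedge dd^cw_n=0$. But the rest of the argument has two problems. First, a genuine error of fact: $\{v>u\}$ is \emph{not} open. Plurisubharmonic functions are upper semicontinuous, so $\{u<c\}$ is open but $\{v>c\}$ is not; your displayed union is not a union of open sets (take $v$ with a dense $-\infty$ set and $u=v+\varphi$, $\varphi<0$ continuous psh: then $\{v>u\}=\{v>-\infty\}$ is not open). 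This particular error is repairable, because you do not need openness: a pluripolar set is Lebesgue-null, and two psh functions with $u\le v$ and $u=v$ Lebesgue-a.e.\ coincide everywhere.

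The fatal gap is the last step, the promotion of the mixed-mass vanishing to $C_n(\{v>u\})=0$. The information you have is that the measures $dd^cw^{(M)}\wedge T$ put no mass on $\{v>u\}$; but all of these measures can vanish identically on a large open set (wherever $w$ is pluriharmonic --- think of $w$ a Green function, maximal off its pole), so their vanishing on $\{v>u\}$ carries no smallness information there. The fact you invoke, $\chi_{\{w>-\infty\}}\ddcn{w}\ll C_n$, runs in the wrong direction: it lets you deduce vanishing of the measure from $C_n(E)=0$, never the converse; and a Chern--Levine--Nirenberg estimate likewise bounds masses \emph{above} by capacity. Note also that you never actually use the hypothesis $w\not\equiv 0$, which is a warning sign. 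The correct way to finish is to feed the lemma a $w_1$ that is both admissible and \emph{strictly} plurisubharmonic somewhere controllable: fix $\rho$ smooth strictly psh with $-1\le\rho\le-\tfrac12$ on $\Omega$ and set $w_1=\max(w,\varepsilon\rho)$, $w_2=\cdots=w_n=\rho$. Then $-w_1\le-w$ keeps the integral finite, while on the open set $\{w<\varepsilon\rho\}$ one has $dd^cw_1\wedge(dd^c\rho)^{n-1}=\varepsilon\,(dd^c\rho)^n\ge c_\varepsilon\,dV$, so $(v-u)^n=0$ Lebesgue-a.e.\ there and hence $u=v$ on $\{w<\varepsilon\rho\}$. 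Letting $\varepsilon\to0^+$ exhausts $\{w<0\}$, and $\{w<0\}=\Omega$ by the maximum principle precisely because $w\le 0$ and $w\not\equiv0$. That is where the non-triviality of $w$ enters, and it is the step your proposal is missing.
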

\begin{theorem}\label{xing_thm_dir_N(H)}  Assume that $\mu$ is a
non-negative measure defined on $\Omega$ by $\mu=\ddcn{\varphi}$,
$\varphi\in\Ker$ with $\mu (A)=0$ for every pluripolar set
$A\subseteq\Omega$. Then for every $H\in\E$ such that
$\ddcn{H}\leq\mu$ there exists a uniquely determined function
$u\in\Ker (H)$ such that $\ddcn{u}=\mu$ on $\Omega$.
\end{theorem}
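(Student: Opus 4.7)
The proof splits into uniqueness (immediate from the comparison principle) and existence (a monotone approximation built around the natural subsolution $H+\varphi$).

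\emph{Uniqueness.} If $u_1,u_2\in\Ker(H)$ both satisfy $\ddcn{u_i}=\mu$, then each $\ddcn{u_i}$ vanishes on pluripolar sets. Since $u_2\leq H$ and $\ddcn{u_1}\leq\ddcn{u_2}$, Corollary~\ref{xing_cor_CP}(2) applied to the pair $(u,v)=(u_1,u_2)$ gives $u_1\geq u_2$; swapping the roles of $u_1$ and $u_2$ yields equality.

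\emph{Existence.} Put $w:=H+\varphi$. The bounds $H+\varphi\leq w\leq H$ show $w\in\Ker(H)$, and the multilinear expansion of the Monge--Amp\`ere operator (whose cross terms are non-negative) yields
\[
\ddcn{w} \;\geq\; \ddcn{H} + \ddcn{\varphi} \;\geq\; \mu,
\]
so $w$ is a subsolution inside $\Ker(H)$. Now fix a fundamental sequence $[\Omega_s]$ and truncate: $\mu_s:=\chi_{\Omega_s}\mu$ is a compactly supported, pluripolar-null Radon measure of finite total mass (finite because $\ddcn{w}$ is locally finite), with $\mu_s\nearrow\mu$ and $\mu_s\leq\ddcn{w}$. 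Invoking Cegrell's subsolution theorem for finite-mass problems in the class $\F(H)$ (available because $\mu_s$ has finite mass, vanishes on pluripolar sets, and is dominated by the Monge--Amp\`ere mass of a function in $\Ker(H)$), I produce $u_s\in\F(H)\subseteq\Ker(H)$ with $\ddcn{u_s}=\mu_s$. Corollary~\ref{xing_cor_CP}(2) then forces both $u_s\geq w$ and, by the monotonicity of $\mu_s$ in $s$, $u_s\geq u_{s+1}$. The limit $u:=\lim_{s\to\infty}u_s$ satisfies $H+\varphi\leq u\leq H$, whence $u\in\Ker(H)$, and weak continuity of the Monge--Amp\`ere operator along decreasing sequences in $\E$, together with Corollary~\ref{DP_cor_conv}, identifies $\ddcn{u}=\lim_s\mu_s=\mu$.

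\emph{Main obstacle.} The crucial technical input is the finite-mass building block: given a finite, pluripolar-null measure $\mu_s$ dominated by the Monge--Amp\`ere mass of some element of $\Ker(H)$, one needs a solution $u_s\in\F(H)$ of $\ddcn{u_s}=\mu_s$. Once this is granted, the uniform sandwich $H+\varphi\leq u_s\leq H$ confines the whole scheme inside $\Ker(H)$ and Theorem~\ref{ID_thm} fixes the limit unambiguously. The hypothesis that $\mu$ vanishes on pluripolar sets is used systematically to legitimise every appeal to the comparison principle in Corollary~\ref{xing_cor_CP}(2), both when comparing successive $u_s$'s and when bounding them below by $w$.
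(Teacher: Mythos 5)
Your overall architecture---truncate $\mu$ to compactly supported finite-mass pieces, solve for each piece, sandwich the solutions between $H+\varphi$ and $H$ via the comparison principle, and pass to the decreasing limit---is the same as the paper's, and your uniqueness argument and final limiting step are fine. The gap is the building block you black-box. You ask for $u_s\in\F(H)$ with $\ddcn{u_s}=\mu_s=\chi_{\Omega_s}\mu$ for an \emph{arbitrary} $H\in\E$, citing only that $\mu_s$ is finite, pluripolar-null and dominated by $\ddcn{w}$. No such theorem can hold in that generality: membership in $\F(H)$ forces $u_s\leq H$, which is incompatible with $\ddcn{u_s}$ being much smaller than $\ddcn{H}$. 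Concretely, take $\mu\equiv 0$ (so $\varphi=0$) and $H\in\E$ non-maximal: if $u\in\Ker(H)$ were maximal with $u\geq H+\psi$ for some $\psi\in\Ker$, then $u=\tilde u\geq\widetilde{H+\psi}\geq\tilde H+\tilde\psi=\tilde H\geq H\geq u$, forcing $H=\tilde H$ to be maximal---a contradiction. This also explains the red flag that your argument never uses the hypothesis $\ddcn{H}\leq\mu$; a correct proof must use it, since the statement is false without it. Moreover, even granting a corrected building block that assumes $\ddcn{H}\leq\mu_s$, your truncations do not satisfy it: $\mu_s$ vanishes off $\Omega_s$ while $\ddcn{H}$ need not.

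The paper resolves precisely this difficulty by a double approximation. It solves the Dirichlet problem on the subdomains $\Omega_j$ with boundary data $H_k^j$ (where $H_k\in\Eo\cap C(\bar\Omega)$ decreases to $H$ and $H_k^j$ is as in Definition~\ref{maj_def_seq}), which is bounded, continuous and \emph{maximal on} $\Omega_j$, so the cited results of \cite{cegrell_gdm} and \cite{cegrell_bdd} apply; the sandwich $H_k^j\geq u_{j,k}\geq\varphi_j+H_k^j$ then follows from Corollary~\ref{xing_cor_CP} together with that maximality. Only after letting $k\to+\infty$ does the hypothesis $\ddcn{H}\leq\mu$ enter, again through Corollary~\ref{xing_cor_CP}, to force $u_j\leq H$ on $\Omega_j$ and hence monotonicity of $[u_j]$. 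You need to supply this reduction (or an equivalent one) before your limiting argument can close.
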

\begin{proof} The uniqueness part of this theorem follows by
the comparison principle in Corollary~\ref{xing_cor_CP}. We will
proceed with the existence part. Theorem~\ref{maj_thm_appr}
implies that there exists a decreasing sequence $[H_k]$,
$H_k\in\Eo\cap C(\bar\Omega)$, that converges pointwise
to $H$, as $j\to +\infty$. Let $[\Omega_j]$ be a fundamental sequence
 in $\Omega$. For each $j,k\in\N$ let $H_{k}^j$ be the
function defined as in Definition~\ref{maj_def_seq}, i.e.,
\[
H_{k}^j= \sup\big\{\varphi\in\PSH{\Omega}: \varphi\leq
H_{k}\;\;\mbox{on}\;\; \comp{\Omega_{j}}\big\}\, ,
\]
Then $H_{k}^j\in\Eo (\Omega)$ and $H_{k}^j$ is maximal on
$\Omega_j$. Consider the measure $\mu_j=\chi_{\Omega_j}\mu$
defined on $\Omega$, where $\chi_{\Omega_j}$ is the characteristic
function for the set $\Omega_j$ in $\Omega$ . For each $j\in\N$
the measure $\mu_j$ is a compactly supported Borel measure defined
on $\Omega$, $\mu_j$ vanishes on all pluripolar sets in $\Omega$
and $\mu_j (\Omega_j)<\mu_j (\Omega)<+\infty$. Therefore it
follows from Lemma~5.14 in~\cite{cegrell_gdm} that there exists a
uniquely determined function $\varphi_j\in\F (\Omega_j)$ such that
$\ddcn{\varphi_j}=\mu_j$ on $\Omega_j$. Moreover, from Theorem~4.1
in~\cite{cegrell_bdd} it follows that there exists functions
$u_{j,k}\in \F (\Omega_j,H_k^j)$ such that $\ddcn{u_{j,k}}=\mu_j$
on $\Omega_j$. Corollary~\ref{xing_cor_CP} implies that
\begin{equation}\label{xing_thm_dir_N(H)_1}
H_k^j\geq u_{j,k}\geq \varphi_j +H_k^j\;\;\;\;\text{ on }\;\;
\Omega_j\, ,
\end{equation}
since $\ddcn{u_{j,k}}\leq\ddcn{(\varphi_j+H_k^j)}$ and $H_k^j$ is
maximal on $\Omega_j$. The comparison principle (Corollary~\ref{xing_cor_CP})
yields that $[u_{j,k}]_{k=1}^{\infty}$ is a decreasing sequence. Let
$k\to +\infty$ and set $u_j=\lim_{k\to +\infty}u_{j,k}$,
then~(\ref{xing_thm_dir_N(H)_1}) gives us that $H^j\geq u_j\geq
\varphi_j +H^j$ on $\Omega_j$, i.e., $u_j\in \F
(\Omega_j,H^j)\subseteq\Ker (\Omega_j,H^j)$. From the assumption
that $\mu\geq\ddcn{H}$ we get that
$\ddcn{u_j}=\mu_j=\chi_{\Omega_j}\mu=\mu\geq\ddcn{H}$ on
$\Omega_j$ and therefore it follows from
Corollary~\ref{xing_cor_CP} that $u_j\leq H$ on $\Omega_j$. The
construction of $\mu_j$ and the fact that $[\Omega_j]$ is an
increasing sequence imply that $\ddcn{u_j}=\ddcn{u_{j+1}}$ on
$\Omega_j$. Hence $[u_j]$ is decreasing and
\begin{equation}
H\geq u_j\geq \varphi +H\;\;\;\;\text{ on }\;\; \Omega_j\, .
\end{equation}
Thus, the function $u=(\lim_{j\to +\infty}u_j)\in\Ker (\Omega,H)$ is such that
$\ddcn{u}=\mu$ on $\Omega$.
\end{proof}
\begin{remark}
Let $\mu$ be a non-negative measure defined on $\Omega$ such that
it vanishes on pluripolar subsets of $\Omega$ and that there
exists a function $\varphi\in\PSH{\Omega}$, $\varphi< 0$, such
that $\int_{\Omega}(-\varphi)\,d\mu<+\infty$. Then it follows
from~\cite{cegrell_bdd} that there exists a uniquely determined
function $\varphi\in\Ker$ such that $\ddcn{\varphi}=\mu$.
\end{remark}
\section{Monge-Amp\`{e}re measures carried on pluripolar sets}\label{sec_DP}

 Lemma~\ref{DP_lem_sing1} is due to Demailly (\cite{demailly_Plen}). Here we include his
 proof in our setting.

\begin{lemma}\label{DP_lem_sing1}
Let $u,u_k,v\in\E$, $k=1,\ldots, n-1$, with $u\geq v$ on $\Omega$ and set $T=dd^c u_1\wedge\cdots\wedge dd^c u_{n-1}$.
Then
\[
\chi_{\{u=-\infty\}} dd^c u\wedge T\leq \chi_{\{v=-\infty\}} dd^c v\wedge T\, .
\]
\end{lemma}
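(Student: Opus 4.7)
The plan is to reduce to a comparison for bounded plurisubharmonic functions via canonical truncation and then pass to the limit. For each $c > 0$ set $u_c := \max(u,-c)$ and $v_c := \max(v,-c)$. These are bounded plurisubharmonic functions in $\E$ with $u_c \geq v_c \geq -c$, and $u_c \downarrow u$, $v_c \downarrow v$ as $c \to +\infty$. By continuity of the Monge-Amp\`ere operator along decreasing sequences in $\E$ (the tool also underlying Corollary~\ref{DP_cor_conv} and originating in~\cite{cegrell_gdm}), the measures $dd^c u_c \wedge T$ and $dd^c v_c \wedge T$ converge weakly to $dd^c u \wedge T$ and $dd^c v \wedge T$ respectively.

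Because $u \geq v$ implies $\{u \leq -c\} \subseteq \{v \leq -c\}$ for every $c$, and since $\{u=-\infty\}=\bigcap_{c>0}\{u\leq -c\}$ as a decreasing intersection of Borel sets, monotone convergence for the positive Borel measure $dd^c u\wedge T$ gives
\begin{equation*}
\chi_{\{u=-\infty\}}\, dd^c u\wedge T = \lim_{c\to+\infty}\chi_{\{u\leq -c\}}\, dd^c u\wedge T,
\end{equation*}
with the analogous identity for $v$. It therefore suffices to establish, for every fixed $c > 0$, the inequality of positive measures
\begin{equation*}
\chi_{\{u\leq -c\}}\, dd^c u\wedge T \leq \chi_{\{v\leq -c\}}\, dd^c v\wedge T.
\end{equation*}

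For this bounded comparison I would exploit locality of the Monge-Amp\`ere operator: on the open set $\{u>-c\}$ one has $u_c = u$, so
\begin{equation*}
\chi_{\{u\leq -c\}}\, dd^c u\wedge T = dd^c u\wedge T - \chi_{\{u>-c\}}\, dd^c u_c\wedge T,
\end{equation*}
and the analogous identity with $v$. Testing the target inequality against $\varphi \in C_c(\Omega)$, $\varphi \geq 0$, I would approximate $\chi_{\{u\leq -c\}}$ by $\eta_\varepsilon(u_c)$, where $\eta_\varepsilon:\RE\to[0,1]$ is smooth, nonincreasing, equal to $1$ on $(-\infty,-c]$ and to $0$ on $[-c+\varepsilon,+\infty)$, and use Stokes' theorem to transfer $dd^c$ between $u_c$ and $\eta_\varepsilon(u_c)\varphi$. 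The monotonicity $u_c \geq v_c$, the identity $u_c = v_c = -c$ on $\{u \leq -c\}$, and positivity of the remaining test forms then give the desired comparison at the bounded level.

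The main obstacle will be justifying the Stokes / integration-by-parts step when $T = dd^c u_1 \wedge\cdots\wedge dd^c u_{n-1}$ is built from possibly unbounded $u_j\in\E$, and controlling the boundary contributions on $\partial\{u\leq -c\}$, $\partial\{v\leq -c\}$ when $\varepsilon\to 0^+$. To handle this I would first truncate each $u_j$ as $\max(u_j,-c)$, run the argument with the resulting bounded current, and then diagonalize using the continuity of Monge-Amp\`ere along decreasing sequences. Once the bounded inequality is available for every $c > 0$, taking $c\to+\infty$ in the monotone identity above yields the lemma.
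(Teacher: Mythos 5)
The reduction to the finite-level inequality $\chi_{\{u\leq -c\}}\, dd^c u\wedge T \leq \chi_{\{v\leq -c\}}\, dd^c v\wedge T$ is where your argument breaks: this inequality of positive measures is false in general, so no amount of care with the subsequent Stokes/regularization step can repair the strategy. Already for $n=1$ (where $T$ is absent and $dd^cu\wedge T=dd^cu$ is the Riesz measure), take $\Omega=\D$, $u(z)=|z|^2-1$ and $v(z)=10\log|z|$. Then $u,v\in\E$ and $u\geq v$ (both vanish at $|z|=1$ and $10/t\geq 2t$ on $(0,1]$), while $dd^cu$ is a multiple of Lebesgue measure and $dd^cv$ is a point mass at the origin. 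For $c=1/2$ the set $\{u\leq -c\}$ is a disc of positive area, so $\chi_{\{u\leq -c\}}dd^cu$ charges Borel sets away from $0$ that $\chi_{\{v\leq -c\}}dd^cv$ gives no mass to. The underlying point is that $u\geq v$ controls only the singular parts of the Monge--Amp\`ere measures carried by pluripolar sets; it says nothing about how the diffuse parts compare on sublevel sets at finite height. Your monotone-convergence identity $\chi_{\{u=-\infty\}}dd^cu\wedge T=\lim_{c\to+\infty}\chi_{\{u\leq -c\}}dd^cu\wedge T$ is fine; it is the claimed comparison at each fixed $c$ that fails.

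The paper's proof (Demailly's argument) avoids fixed sublevel sets of $u$ altogether. For $\varepsilon>0$ one sets $w_j=\max((1-\varepsilon)u-j,v)$. Since $u\geq v$, on the open set $\{v<-j/\varepsilon\}$ one has $(1-\varepsilon)u-j\geq(1-\varepsilon)v-j>v$, so $w_j=(1-\varepsilon)u-j$ there, and by locality $dd^cw_j\wedge T=(1-\varepsilon)\,dd^cu\wedge T$ on that set; as $\{u=-\infty\}\subseteq\{v<-j/\varepsilon\}$ this gives $dd^cw_j\wedge T\geq(1-\varepsilon)\chi_{\{u=-\infty\}}dd^cu\wedge T$. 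Letting $j\to+\infty$ (so that $w_j\downarrow v$ and $dd^cw_j\wedge T\to dd^cv\wedge T$) and then $\varepsilon\to0^+$ yields $dd^cv\wedge T\geq\chi_{\{u=-\infty\}}dd^cu\wedge T$, and restricting to $\{v=-\infty\}\supseteq\{u=-\infty\}$ gives the lemma. The factor $(1-\varepsilon)$ and the shift by $-j$ are precisely what allow the comparison to be made only on the shrinking open sets $\{v<-j/\varepsilon\}$, which is the correct substitute for your fixed sets $\{u\leq-c\}$. If you want to salvage a truncation approach, you must likewise only ever extract information on neighborhoods (in this level-set sense) of $\{u=-\infty\}$, not on $\{u\leq -c\}$ for finite $c$.
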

\begin{proof} Let $\varepsilon> 0$. Set $w_j=\max((1-\varepsilon)u-j,v)$. Then
$w_j=(1-\varepsilon)u -j$ on the open set $\{v<-\frac{j}{\varepsilon}\}$ and therefore
we have that
\[
dd^c w_j\wedge T = (1-\varepsilon) dd^c u\wedge T \text{ on } \left\{v<-\frac{j}{\varepsilon}\right\}\, .
\]
Hence $dd^c w_j\wedge T\geq (1-\varepsilon) \chi_{\{u=-\infty\}} dd^c u\wedge T$. Let
$j\to +\infty$, then
\[
dd^c v\wedge T\geq  (1-\varepsilon)\chi_{\{u=-\infty\}} dd^c u\wedge T \text{ on } \Omega\, .
\]
The proof is completed as $\varepsilon\to 0^+$.
\end{proof}
\begin{remark} For $j=1,\ldots,n$, let $u_j,v_j\in\E$, and $u_j\geq v_j$, then Lemma~\ref{DP_lem_sing1} implies that
\[
\int_A dd^c u_1\wedge\cdots\wedge dd^c u_n\leq \int_A dd^c v_1\wedge\cdots\wedge dd^c v_n\, ,
\]
for every pluripolar Borel set $A\subseteq\Omega$.
\end{remark}
\begin{remark} Let $u,v\in\E$ and assume that $\ddcn{v}$ vanishes on pluripolar sets. If
$u\geq v$, then it follows from Lemma~\ref{DP_lem_sing1} that  $\ddcn{u}$ vanishes on pluripolar sets.
\end{remark}
\begin{definition}\label{DP_def_aux1} Let $u\in\E$ and $0\leq \tau$ be a bounded
lower semicontinuous function. Then we define
\[
u_\tau=\sup\{\varphi\in\PSH{\Omega}: \varphi\leq \tau^{1/n}u\}\, .
\]
\end{definition}
\noindent Definition~\ref{DP_def_aux1} yields the following
elementary properties:
\begin{enumerate}
\item If $u,v\in\E$ with $u\leq v$, then $u_\tau\leq v_\tau$. \\

\item If $u\in\E$, then $0\geq u_\tau\geq
\|\tau\|^{1/n}_{L^\infty(\Omega)}u\in\E$. Hence, by~\cite{cegrell_gdm} we have that
$u_\tau\in\E$.\\

\item If $\tau_1,\tau_2$ are bounded
lower semicontinuous functions with $\tau_1\leq\tau_2$, then $u_{\tau_1}\geq u_{\tau_2}$.\\

\item If $u\in\E$, then $\supp\ddcn{u_\tau}\subseteq\supp\,\tau$
and if $\supp\, \tau$ is compact then
$u_\tau\in\F$.\\

\item If $[\tau_j]$, $0\leq \tau_j$ is an increasing sequence of
bounded lower semicontinuous functions that converges pointwise to
a bounded lower semicontinuous function $\tau$, as $j$ tends to
$+\infty$, then $[u_{\tau_j}]$ is
a decreasing sequence that converges pointwise to $u_{\tau}$, as $j$ tends to $+\infty$.\\
\end{enumerate}

\begin{lemma}\label{DP_lem_sing2}
Let $u\in\E$ and let $K$ be a compact pluripolar subset of $\Omega$.
Then
\[\ddcn{u_K}=\chi_K\ddcn{u}\, , \]
where $u_{\chi_O}$ is as in Definition~\ref{DP_def_aux1} and
\[
u_K=\left(\sup\{u_{\chi_O}:\; K\subset O\subset \Omega, \; O
\text{ is open}\}\right)^*\, .
\]
\end{lemma}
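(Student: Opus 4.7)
The plan is to analyse the approximants $\varphi_j = u_{\chi_{O_j}}$, where $[O_j]$ is a decreasing sequence of relatively compact open neighbourhoods of $K$ satisfying $\bar{O}_{j+1}\subset O_j$ and $\bigcap_j O_j = K$; such a sequence exists because $K$ is compact. Property~(3) after Definition~\ref{DP_def_aux1} makes $[\varphi_j]$ increasing, property~(4) places each $\varphi_j$ in $\F$, and $u\leq\varphi_j\leq 0$ together with the monotonicity closure of $\E$ gives $u_K = (\lim_j \varphi_j)^{*}\in\E$. Since $\varphi_j\leq u_K\leq 0$, the observations following Definition~\ref{maj_def_seq} force $\tilde{u}_K=0$, and Lemma~\ref{DP_lem_ineq} furnishes $\int_\Omega\ddcn{u_K}\leq\int_\Omega\ddcn{\varphi_1}<+\infty$, so in fact $u_K\in\F$.

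The second step records two structural identities. On $O_j$ one has $\chi_{O_j}u=u$, so the defining inequality forces $\varphi_j\leq u$ on $O_j$; conversely, on $\Omega\setminus O_j$ the inequality $u\leq 0=\chi_{O_j}u$ shows that $u$ itself competes in the supremum, hence $u\leq\varphi_j$. Thus $\varphi_j\equiv u$ on $O_j$, and the locality of the Monge-Amp\`ere operator gives $\ddcn{\varphi_j}=\ddcn{u}$ on $O_j$. On $\Omega\setminus\bar{O}_j$ the constraint $\chi_{O_j}u\equiv 0$ makes $\varphi_j$ the envelope of plurisubharmonic functions dominated by $0$; a standard gluing argument shows $\varphi_j$ is maximal there, whence $\supp\ddcn{\varphi_j}\subseteq\bar{O}_j$.

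The third step is a convergence argument. Because $[\varphi_j]$ is an increasing sequence in $\F$ with limit $u_K\in\F$, one has $\ddcn{\varphi_j}\to\ddcn{u_K}$ in the weak$^*$ topology. For the support statement, fix $z_0\in\Omega\setminus K$ and a small ball $U\ni z_0$ with $\bar U\cap K=\emptyset$; for $j$ large, $\bar U\cap\bar{O}_j=\emptyset$, so $\varphi_j$ is maximal and locally bounded on $U$. The Bedford-Taylor continuity for locally bounded increasing sequences then forces $\ddcn{u_K}|_U=0$, and varying $z_0$ yields $\supp\ddcn{u_K}\subseteq K$.

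Finally I would squeeze the measure from both sides. From $u\leq u_K$ and Lemma~\ref{DP_lem_sing1}, $\chi_{\{u_K=-\infty\}}\ddcn{u_K}\leq\chi_{\{u=-\infty\}}\ddcn{u}$; combining this with $\supp\ddcn{u_K}\subseteq K$ and Theorem~5.11 in~\cite{cegrell_gdm}, which ensures that $\chi_{\{u_K>-\infty\}}\ddcn{u_K}$ puts no mass on pluripolar sets, yields $\ddcn{u_K}\leq\chi_K\ddcn{u}$. For the reverse inequality on total mass, the identity $\ddcn{\varphi_j}=\ddcn{u}$ on $O_j\supset K$ gives $\int_\Omega\ddcn{\varphi_j}\geq\ddcn{u}(K)$, and testing weak$^*$ convergence against $\phi\in C^\infty_0(\Omega)$ with $\phi\equiv 1$ on $\bar{O}_1$ (which contains the supports of all $\ddcn{\varphi_j}$ and of $\ddcn{u_K}$) yields $\int_\Omega\ddcn{u_K}=\lim_j\int_\Omega\ddcn{\varphi_j}\geq\ddcn{u}(K)=\int_\Omega\chi_K\ddcn{u}$. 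The pointwise domination together with equal total masses forces $\ddcn{u_K}=\chi_K\ddcn{u}$. The principal technical obstacle is justifying the weak$^*$ convergence $\ddcn{\varphi_j}\to\ddcn{u_K}$ for an increasing sequence in $\F$ whose limit need not be locally bounded; one extracts it from the monotonicity of $\int_\Omega\ddcn{\varphi_j}$ supplied by Lemma~\ref{DP_lem_ineq} together with the convergence machinery developed in~\cite{cegrell_gdm}.
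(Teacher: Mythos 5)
Your proposal is correct and follows essentially the same route as the paper: approximate by $u_{\chi_{O_j}}$ for a decreasing neighbourhood basis of $K$, use locality ($u_{\chi_{O_j}}=u$ on $O_j$) plus $\supp\ddcn{u_K}\subseteq K$, and control the singular part via Lemma~\ref{DP_lem_sing1}. The only (harmless) variation is that you obtain the lower bound $\ddcn{u_K}\geq\chi_K\ddcn{u}$ by a total-mass count after the weak$^*$ limit, whereas the paper passes the inequality $\ddcn{u_{\chi_{O_j}}}\geq\chi_K\ddcn{u}$ directly through that same limit.
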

\begin{proof}
Choose a decreasing sequence $[O_j]$, $O_j\subseteq\Omega$, such
that $K=\bigcap_j O_j$. Then $[u_{\chi_{O_j}}]$ is an increases
sequence that converges to $u_K$ outside a pluripolar set,  as
$j\to +\infty$, and $\supp \ddcn{u_K}\subseteq\bigcap\bar{O}_j=
K$. We have that $u_{\chi_{O_j}}=u$ on $O_j$ hence $\ddcn{u_{\chi_{O_j}}}\geq \chi_K \ddcn{u}$, so
$\ddcn{u_K}\geq\chi_K\ddcn{u}$. On the other hand, $u_K\geq u$ and therefore
we have by Lemma~\ref{DP_lem_sing1} that
\[
\int_K\ddcn{u_K}\leq \int_K\ddcn{u} \text{ and }
\ddcn{u_K}=\chi_K\ddcn{u}\, .
\]
\end{proof}
\begin{lemma}\label{DP_lem_CS} Let $u_1,\ldots,u_n\in\E$. Then
\[
\int_A dd^c u_1\wedge\cdots\wedge dd^c u_n\leq
\left(\int_A\ddcn{u_1}\right)^{1/n}\cdots\left(\int_A\ddcn{u_n}\right)^{1/n}\,
,
\]
for every pluripolar Borel set $A\subset\Omega$.
\end{lemma}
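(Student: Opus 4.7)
The plan is to reduce to compact pluripolar sets, localize each $u_j$ to a shrinking neighborhood of such a set via the envelope $u_{j,\chi_O}$ of Definition~\ref{DP_def_aux1}, and then apply Cegrell's Cauchy-Schwarz inequality for the class $\F$. Since every mixed Monge-Amp\`{e}re measure appearing in the statement is Radon on $\Omega$, inner regularity reduces matters to showing the inequality for an arbitrary compact pluripolar $K\subset\Omega$ (every compact subset of a pluripolar set being pluripolar).

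For such a $K$, fix an open set $O$ with $K\subset O\Subset\Omega$ and set $v_j^O:=u_{j,\chi_O}$ for $j=1,\ldots,n$. Property~(4) of Definition~\ref{DP_def_aux1} gives $v_j^O\in\F$, while a direct inspection of admissible functions yields $v_j^O=u_j$ on $O$: since $u_j\leq 0$, the function $u_j$ itself is admissible in the sup defining $u_{j,\chi_O}$ (it satisfies $u_j\leq\chi_O^{1/n}u_j$ on $\Omega$), so $v_j^O\geq u_j$, while on $O$ every admissible $\varphi$ obeys $\varphi\leq\chi_O^{1/n}u_j=u_j$, so $v_j^O\leq u_j$ on $O$. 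Hence by locality of the complex Monge-Amp\`{e}re operator, the mixed currents $dd^c v_1^O\wedge\cdots\wedge dd^c v_n^O$ and $dd^c u_1\wedge\cdots\wedge dd^c u_n$ agree on $O$.

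Applying Cegrell's Cauchy-Schwarz inequality on $\F$ to $v_1^O,\ldots,v_n^O$ then gives
\[
\int_K dd^c u_1\wedge\cdots\wedge dd^c u_n \leq \int_\Omega dd^c v_1^O\wedge\cdots\wedge dd^c v_n^O \leq \prod_{j=1}^n\Bigl(\int_\Omega\ddcn{v_j^O}\Bigr)^{1/n}.
\]
Taking a decreasing sequence of open neighborhoods $O_m\searrow K$, property~(3) of Definition~\ref{DP_def_aux1} gives $v_j^{O_m}\nearrow (u_j)_K$, and Lemma~\ref{DP_lem_sing2} yields $\int_\Omega\ddcn{(u_j)_K}=\int_K\ddcn{u_j}$. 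Because all the $\ddcn{v_j^{O_m}}$ are supported in the fixed compact $\overline{O_1}$, Cegrell's continuity of the complex Monge-Amp\`{e}re operator on $\F$ for monotone sequences yields $\int_\Omega\ddcn{v_j^{O_m}}\to\int_K\ddcn{u_j}$, and the desired inequality follows upon letting $m\to\infty$.

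The delicate step is precisely the mass-convergence $\int_\Omega\ddcn{v_j^{O_m}}\to\int_K\ddcn{u_j}$: the measures $\ddcn{v_j^{O_m}}$ may concentrate on the pluripolar set $K$ in the limit, so the argument relies on continuity of the complex Monge-Amp\`{e}re operator for the \emph{increasing} $\F$-sequence $v_j^{O_m}\nearrow(u_j)_K$. All other ingredients---inner regularity, the envelope identity $v_j^O=u_j$ on $O$, and Cauchy-Schwarz on $\F$---are standard tools from Cegrell's theory.
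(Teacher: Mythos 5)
Your proof is correct and takes essentially the same route as the paper's: both arguments localize each $u_j$ to the envelope $u_{j,\chi_{O}}\in\F$ over a shrinking basis of open neighbourhoods of a compact pluripolar set, apply Cegrell's Corollary~5.6 (the Cauchy-Schwarz inequality in $\F$), and pass to the limit via Lemma~\ref{DP_lem_sing2} together with the fact that all the relevant masses are carried by a fixed compact subset of $\Omega$. The ``delicate'' mass-convergence step you single out is present, equally implicitly, in the paper's own proof.
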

\begin{proof} Without loss of generality we can assume that $A$ is
a compact pluripolar set and $u_1,\ldots,u_n\in\F$. Let $[G_j]$ be
a decreasing sequence of open subsets of $\Omega$ with $\bigcap_j
G_j =A$. Corollary~5.6 in~\cite{cegrell_gdm} yields that
\[
\int_\Omega dd^cu_{1_{G_j}}\wedge\cdots\wedge dd^cu_{n_{G_j}}\leq
\left(\int_\Omega
\ddcn{u_{1_{G_j}}}\right)^{1/n}\cdots\left(\int_\Omega
\ddcn{u_{n_{G_j}}}\right)^{1/n}
\]
For $1\leq k\leq n$ we have that $u_{k_{G_j}}=u_k$ on $G_j$ and
$\text{supp}(dd^cu_{k_{G_j}})^n\subset\bar {G}_j\subset\bar
{G}_1$, hence
\[
\int_{G_j} dd^cu_1\wedge\cdots\wedge dd^cu_n\leq\left(\int_{\bar
{G}_1} \ddcn{u_{1_{G_j}}}\right)^{1/n}\cdots\left(\int_{\bar
{G}_1}\ddcn{u_{n_{G_j}}}\right)^{1/n}\, .
\]
Let $j\to +\infty$. Lemma~\ref{DP_lem_sing2} then yields that
\[
\int_A dd^cu_1\wedge \cdots\wedge dd^cu_n\leq \left(\int_{\bar
{G}_1}\ddcn{u_{1_A}}\right)^{1/n}\cdots\left(\int_{\bar
{G}_1}\ddcn{u_{n_A}}\right)^{1/n}\, .
\]
\end{proof}

 For $u\in\E$ we write $\mu_u=\chi_{\{u=-\infty\}}\ddcn{u}$ and
define $S$ to be the class of simple functions
$f=\sum_{j=1}^m\alpha_j\chi_{E_j}$, $\alpha_j>0$, where $E_j$ are
pairwise disjoint and $\mu$-measurable such that $f$ is compactly
supported and vanishes outside $\{u=-\infty\}$. We write $T$ for
functions in $S$ where the $E_j$'s are compact.

\begin{definition}\label{DP_def_aux2}
 Let $u\in\E$ and $0\leq g\leq 1$ be a $\mu_u$-measurable function. We define
\[
u^g=\inf_{f\in T \atop f\leq g} \left(\sup\{u_\tau: f\leq \tau,\;
\tau \text{ is a bounded lower semicontinuous
function}\}\right)^*\, .
\]
\end{definition}
From Definition~\ref{DP_def_aux2} it follows that $u\leq u^g\leq
0$ and if $g_1\leq g_2$, then $u^{g_1}\geq u^{g_2}$. Furthermore, if $g\in T$,
then
\[
u^g=\left(\sup\{u_\tau: g\leq \tau,\;\tau \text{ is a bounded
lower semicontinuous function}\}\right)^*\in\F\, .
\]

\begin{lemma}\label{DP_lem_simple} Let $u\in\E$ and $g\in S$, then
$u^g\in\F$ and $\ddcn{u^g}=g\ddcn{u}$.
\end{lemma}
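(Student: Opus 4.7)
My plan is to bootstrap the atomic case handled by Lemma~\ref{DP_lem_sing2} via an approximation by bounded lower semicontinuous majorants, and then use the pluripolar comparison Lemma~\ref{DP_lem_sing1} to pin down the Monge-Amp\`ere measure.

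\emph{Atomic piece.} For $g=\alpha\chi_K$ with $K\subset\{u=-\infty\}$ compact pluripolar and $\alpha>0$, the scaling identity $u_{\alpha\sigma}=\alpha^{1/n}u_\sigma$ for bounded lsc $\sigma\geq 0$ is immediate from Definition~\ref{DP_def_aux1} since $u\leq 0$. This gives $u^{\alpha\chi_K}=\alpha^{1/n}u_K$, and Lemma~\ref{DP_lem_sing2} yields $u^{\alpha\chi_K}\in\F$ with $\ddcn{u^{\alpha\chi_K}}=\alpha\chi_K\ddcn{u}$.

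\emph{Approximation for general $g\in T$.} Write $g=\sum_j\alpha_j\chi_{E_j}$ with disjoint compact $E_j\subset\{u=-\infty\}$. Choose pairwise disjoint open sets $O_{j,k}\Subset\Omega$ with $E_j\subset O_{j,k}$ and $O_{j,k}\searrow E_j$, and set $\tau_k=\sum_j\alpha_j\chi_{O_{j,k}}$ (bounded lsc, $\tau_k\searrow g$). Property~(3) of Definition~\ref{DP_def_aux1} makes $[u_{\tau_k}]$ increasing, property~(4) places each $u_{\tau_k}\in\F$, and Definition~\ref{DP_def_aux2} identifies $u^g=(\lim_k u_{\tau_k})^*$. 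Setting $v_j:=u^{\alpha_j\chi_{E_j}}=\alpha_j^{1/n}u_{E_j}\in\F$ from the atomic case, the monotonicity $\alpha_j\chi_{E_j}\leq g$ forces $u^g\leq v_j$, and sandwiching $u_{\tau_1}\leq u^g\leq\min_j v_j$ puts $u^g\in\F$.

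\emph{Monge-Amp\`ere identity.} Corollary~\ref{DP_cor_conv} gives the weak$^*$ convergence $\ddcn{u_{\tau_k}}\to\ddcn{u^g}$; since $\supp\ddcn{u_{\tau_k}}\subseteq\bigcup_j\bar O_{j,k}\searrow\bigcup_j E_j$, we deduce $\supp\ddcn{u^g}\subseteq\bigcup_j E_j$. For the lower bound, apply Lemma~\ref{DP_lem_sing1} to $u^g\leq v_j$ (both $-\infty$ on $E_j$) to obtain
\[
\alpha_j\chi_{E_j}\ddcn{u}=\chi_{E_j}\ddcn{v_j}\leq\chi_{E_j}\ddcn{u^g};
\]
summing over the disjoint $E_j$'s gives $\ddcn{u^g}\geq g\ddcn{u}$. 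For the matching upper bound, I compare total masses on each $E_j$: combining the sharp envelope inequality $u^g\leq v_j$ with Lemma~\ref{DP_lem_CS} applied to the mixed $(dd^c)^n$-measures on the pluripolar set $E_j$, cross-contributions from other $E_i$'s are controlled (those factors give zero $E_j$-integrals since $E_i\cap E_j=\emptyset$), forcing $\int_{E_j}\ddcn{u^g}\leq\alpha_j\int_{E_j}\ddcn{u}$. Together with the pointwise lower bound, equality of the two finite positive measures yields $\ddcn{u^g}=g\ddcn{u}$.

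The main obstacle is the matching upper bound on the Monge-Amp\`ere mass on each $E_j$. Since each $u_{\tau_k}$ is the envelope of a non-plurisubharmonic obstacle $\tau_k^{1/n}u$, the naive Perron-Bremermann identity $\ddcn{u_{\tau_k}}=\tau_k\ddcn{u}$ fails in general and excess mass may sit on $\bar O_{j,k}\setminus E_j$ for each finite $k$. The key point is to show this extra mass vanishes in the weak limit, which requires the Cauchy-Schwarz-type estimate of Lemma~\ref{DP_lem_CS} combined with the localization provided by $u^g\leq v_j$ to rule out any surplus beyond $g\ddcn{u}$ on the pluripolar union $\bigcup_j E_j$.
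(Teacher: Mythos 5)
Your argument covers only $g\in T$ (compact $E_j$'s), but the lemma is stated for $g\in S$, where the $E_j$ are merely $\mu_u$-measurable. The paper's proof has an entire second half devoted to this extension: each $E_j$ is exhausted by compacts $K_j^p$, the $T$-case is applied to $g\chi_p$, and the identity principle (Theorem~\ref{ID_thm}) is used to show that $u^{f_0\chi_p}\to u^{f_0}$ for every competitor $f_0\in T$, $f_0\le g$, so that the infimum in Definition~\ref{DP_def_aux2} is realized by $\lim_p u^{g\chi_p}$. Without this step you have proved the lemma only for $T$.

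More seriously, within the $T$-case your key upper bound $\int_{E_j}\ddcn{u^g}\le\alpha_j\int_{E_j}\ddcn{u}$ is not actually derived. Lemma~\ref{DP_lem_sing1} turns a pointwise inequality between functions into an inequality between their pluripolar Monge--Amp\`ere masses with the order \emph{reversed}: from $u^g\le v_j$ one gets $\chi_{E_j}\ddcn{v_j}\le\chi_{E_j}\ddcn{u^g}$, which is exactly the lower bound you already established. To bound $\ddcn{u^g}$ from above on $E_j$ you need a plurisubharmonic \emph{minorant} of $u^g$ whose mass on $E_j$ is computable; the paper's choice is $v_1+\cdots+v_m\le u^g$ (valid by subadditivity of $t\mapsto t^{1/n}$ in Definition~\ref{DP_def_aux1}), and it is precisely on $\ddcn{(v_1+\cdots+v_m)}$, expanded multinomially, that Lemma~\ref{DP_lem_CS} acts: every mixed term contains some $dd^c v_i$ with $i\ne j$ and $\int_{E_j}\ddcn{v_i}=0$, so only $\alpha_j\chi_{E_j}\ddcn{u}$ survives. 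Your proposal never introduces this sum, and the inequality $u^g\le v_j$ you invoke cannot produce the upper bound, so the central step of the proof is missing. (Smaller issues: Corollary~\ref{DP_cor_conv} is stated for decreasing sequences, whereas your $[u_{\tau_k}]$ is increasing; and the identification $u^g=(\lim_k u_{\tau_k})^*$ for your particular sequence requires a cofinality argument. Neither approximation is needed once one works directly with the sandwich $v_1+\cdots+v_m\le u^g\le v_j$, as the paper does.)
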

\begin{proof} Assume first that $g\in T$. Then $u^g\in\F$ as
already noted. Let $g=\sum_{k=1}^m \alpha_k \chi_{A_k}$ and
consider $u_k=u^{\alpha_k \chi_{A_k}}$. Then for $1\leq k\leq m$
we have that $u_1+\ldots + u_m\leq u^g\leq u_k$ so if
$B\subseteq\bigcup_{k=1}^{m} A_k$, then it follows from
Lemma~\ref{DP_lem_sing1} that
\[
\int_B\ddcn{u_k}\leq\int_B\ddcn{u^g}\leq\int_B\ddcn{(u_1+\ldots+u_m)}
\;\;\;\; 1\leq k\leq m\, .
\]
Hence, if $B\subset A_k$, then it follows from
Lemma~\ref{DP_lem_sing2} that $\int_B
\ddcn{u_k}=\alpha_k\int_B\ddcn{u}$ and from Lemma~\ref{DP_lem_CS}
we have that
\[
\alpha_k\int_B \ddcn{u}=\int_B\ddcn{(u_1+\ldots +u_m)}\, .
\]
Hence,
\[
\alpha_k\int_B\ddcn{u_k}\leq\int_B\ddcn{u^g}\leq
\alpha_k\int_B\ddcn{u}\;\;\;\; 1\leq k\leq m\, .
\]
for all Borel sets $B\subset A_k$, $k=1,...,m$. Thus
$\ddcn{u^g}=g\ddcn{u}$.

 Assume now that $g\in S$, i.e., $g=\sum_{j=1}^m\alpha_j\chi_{E_j}$,
 $\alpha_j>0$, $E_j$ are pairwise disjoint and $\mu$-measurable
 such that $g$ is compactly supported and vanishes
 outside $\{u=-\infty\}$. Choose to each $E_j$, $1\leq j\leq m$, increasing
 sequences $[K_j^p]_{p=1}^\infty$ of compact subsets of $E_j$ such
 that $\chi_p=\sum_{j=1}^m\chi_{K_j^p}$ converges to
 $\sum_{j=1}^m\chi_{E_j}$ a.e. w.r.t. $\mu$, as $p\to +\infty$.
 Then $\chi_p\in T$ and $g\chi_p\in T$. Furthermore, if $f_0\in T$
 with $f_0\leq g$, then $f_0\chi_p\in T$ and $f_0\chi_p\leq
 g\chi_p$. Hence $u^{f_0\chi_p}\geq u^{g\chi_p}$. By the first
 part of the proof we have that
 $\ddcn{u^{f_0\chi_p}}=f_0\chi_p\ddcn{u}$ and
 $\ddcn{u^{g\chi_p}}=g\chi_p\ddcn{u}$. Theorem~\ref{ID_thm}
 implies that $\lim_{p\to +\infty}u^{f_0\chi_p}=u^{f_0}$, hence
 $u^g\geq\lim_{p\to +\infty}u^{g\chi_p}$. Thus,
 $u^g=\lim_{p\to +\infty}u^{g\chi_p}\in\F$ and $\ddcn{u^g}=g\ddcn{u}$.
\end{proof}
\begin{theorem}\label{DP_thm_simple} Let $u\in\E$ and let $0\leq g\leq 1$
be a $\mu_u$-measurable function that vanishes outside
$\{u=-\infty\}$. Then $u^g\in\E$ and $\ddcn{u^g}=g\ddcn{u}$.
\end{theorem}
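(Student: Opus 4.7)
The plan is to approximate $g$ from below by simple functions in $S$, so that Lemma~\ref{DP_lem_simple} applies to each approximant, and then to identify the decreasing limit with $u^g$ by means of the identity principle Theorem~\ref{ID_thm}. Because Theorem~\ref{ID_thm} requires its inputs to live inside $\F\subseteq\Ker$, I first reduce to the case that $g$ is compactly supported in $\{u=-\infty\}$.

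\textbf{Reduction to compactly supported $g$.} Using inner regularity of the Radon measure $\mu_u$ concentrated on $\{u=-\infty\}$, I would choose an increasing sequence of compact sets $K_k\subseteq\{u=-\infty\}$ with $\mu_u\bigl(\{u=-\infty\}\setminus\bigcup_k K_k\bigr)=0$ and put $g^{(k)}=g\chi_{K_k}\leq g$; then $g^{(k)}\nearrow g$ almost everywhere $[\mu_u]$. Granting the theorem for each compactly supported $g^{(k)}$, the monotonicity of $g\mapsto u^g$ (noted after Definition~\ref{DP_def_aux2}) makes $[u^{g^{(k)}}]$ a decreasing sequence in $\F$ bounded below by $u$, with some limit $v\in\E$. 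Cegrell's weak-$*$ convergence of the Monge-Amp\`ere operator along decreasing sequences in $\E$, combined with the monotone convergence $g^{(k)}\ddcn{u}\nearrow g\ddcn{u}$, yields $\ddcn{v}=g\ddcn{u}$. The identification $v=u^g$ is then formal: $v\geq u^g$ is immediate from $g^{(k)}\leq g$, while any $f\in T$ with $f\leq g$ is compactly supported, hence $f\leq g^{(k)}$ for $k$ large, so $u^f\geq u^{g^{(k)}}\geq v$ and therefore $u^g\geq v$.

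\textbf{Compactly supported case and main obstacle.} Suppose now that $g$ is compactly supported in $\{u=-\infty\}$. By $\mu_u$-measurability, I approximate $g$ from below by simple functions $g_j\in S$ with $g_j\nearrow g$ a.e.~$[\mu_u]$; Lemma~\ref{DP_lem_simple} yields $u^{g_j}\in\F$ with $\ddcn{u^{g_j}}=g_j\ddcn{u}$, and the decreasing limit $v:=\lim_j u^{g_j}$ lies in $\F$ (total masses are bounded by $\ddcn{u}(\supp g)<+\infty$) with $\ddcn{v}=g\ddcn{u}$ by the same convergence argument. To prove $v\leq u^f$ for every $f\in T$ with $f\leq g$, I set $h_j=\max(f,g_j)\in S$, so that $h_j\nearrow g$, $h_j\geq f$, and $h_j\geq g_j$. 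Lemma~\ref{DP_lem_simple} again gives $u^{h_j}\in\F$ with $\ddcn{u^{h_j}}=h_j\ddcn{u}$, and the decreasing limit $w:=\lim_j u^{h_j}\in\F$ therefore satisfies $\ddcn{w}=g\ddcn{u}=\ddcn{v}$, $w\leq u^f$, and $w\leq v$. Since $\supp\ddcn{w}\subseteq\supp g$ is compact, $\int_{\Omega}(-\sigma)\ddcn{w}<+\infty$ for any nonzero $\sigma\in\Eo$, and so Theorem~\ref{ID_thm} applied with $H=0$ forces $v=w\leq u^f$. The hard part of the whole argument is precisely this last identification $v=u^g$ in the compactly supported case: the measure $g\ddcn{u}$ is carried by the pluripolar set $\{u=-\infty\}$, so the usual comparison principle Corollary~\ref{xing_cor_CP} is unavailable, and the finiteness hypothesis $\int(-\sigma)\ddcn{w}<+\infty$ in Theorem~\ref{ID_thm} is exactly what dictates the preliminary truncation $g\mapsto g^{(k)}$ via the pluripolar exhaustion $K_k$.
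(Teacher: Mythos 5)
Your handling of the compactly supported case is correct, and it is essentially a mirror image of the paper's argument: where you pass to $h_j=\max(f,g_j)\nearrow g$ and invoke Theorem~\ref{ID_thm} to identify $\lim_j u^{h_j}$ with $\lim_j u^{g_j}$, the paper works with $\min(f,g_j)\nearrow f$ and uses Theorem~\ref{ID_thm} to identify $\lim_j u^{\min(f,g_j)}$ with $u^f$. The decisive difference is that $\min(f,g_j)$ is compactly supported for free (because $f\in T$ is), so the paper needs no preliminary truncation of $g$ whatsoever; your $\max$ variant forces the reduction, and that is where the problem sits.

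The genuine gap is in the reduction step. Having chosen compacts $K_k\subseteq\{u=-\infty\}$ with $\mu_u\bigl(\{u=-\infty\}\setminus\bigcup_k K_k\bigr)=0$, you assert that every $f\in T$ with $f\leq g$ satisfies $f\leq g^{(k)}=g\chi_{K_k}$ for $k$ large. This is false in general: the $K_k$ exhaust $\{u=-\infty\}$ only up to a $\mu_u$-null set, and a compact set $E_j\subseteq\{u=-\infty\}$ carrying $f$ need not be contained in any $K_k$ (for instance, $\{u=-\infty\}$ may itself be compact while every $K_k$ omits a nonempty $\mu_u$-null part of it). Without that containment you do not obtain $u^f\geq u^{g^{(k)}}$, and the inequality $u^g\geq v$ — which you yourself identify as the crux — is left unproven. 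The step can be repaired with tools you already deploy: put $f_k=\min\bigl(f,g^{(k)}\bigr)=f\chi_{K_k}\in T$; then $f_k\leq g^{(k)}$ gives $u^{f_k}\geq u^{g^{(k)}}\geq v$, the sequence $[u^{f_k}]$ decreases to some $w\geq u^f$ with $\ddcn{w}=\lim_k f\chi_{K_k}\ddcn{u}=f\ddcn{u}=\ddcn{u^f}$ (the last equality of measures because $f$ vanishes off $\{u=-\infty\}$ and $\bigcup_k K_k$ carries $\mu_u$ there), and Theorem~\ref{ID_thm} — applicable since both functions lie in $\F$ and $f$ has compact support — yields $w=u^f\geq v$. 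Once you argue this way, however, the truncation by $K_k$ becomes superfluous: this is precisely the paper's $\min(f,g_j)$ device applied directly to the original $g$, which disposes of the theorem in one pass.
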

\begin{proof} Let $[g_j]$, $g_j\in S$, be an increasing sequence
that converges pointwise to $g$, as $j\to +\infty$. If $f\in T$ with $f\leq g$,
then by Lemma~\ref{DP_lem_simple} we have that $\min(f,g_j)\in S$
and $\ddcn{u^{\min(f,g_j)}}=\min(f,g_j)\ddcn{u}$. From
Theorem~\ref{ID_thm} it follows that $[u^{\min(f,g_j)}]$ is a
decreasing sequence that converges pointwise to $u^f$, as $j\to
+\infty$. Thus, $u^f\geq \lim_{j\to +\infty} u^{g_j}$ for every
$f\in T$ with $f\leq g$. Definition~\ref{DP_def_aux2} yields that
$u^g=\lim_{j\to +\infty} u^{g_j}$ and therefore it follows from
Lemma~\ref{DP_lem_simple} that $u^g\in\E$ and
$\ddcn{u^g}=g\ddcn{u}$.
\end{proof}
\begin{remark} Let $u$ and $g$ be as in Theorem~\ref{DP_thm_simple}.
If $\ddcn{u}$ vanishes on pluripolar sets, then it follows
from Theorem~\ref{DP_thm_simple} and the remark
after Lemma~\ref{DP_lem_sing1} that $u^g=0$ on $\Omega$.
\end{remark}
\begin{corollary} Let $u\in\E$ and $f,g$, $0\leq f,g\leq 1$,
be two $\mu_u$-measurable functions which vanishes outside
$\{u=-\infty\}$. If $f=g$ a.e. w.r.t. $\mu_u$, then $u^f=u^g$.
\end{corollary}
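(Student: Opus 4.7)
The plan is to reduce the claim to the identity principle, Theorem~\ref{ID_thm}, by inserting an auxiliary function that is comparable with both $u^f$ and $u^g$. I would set $h=\min(f,g)$, which is $\mu_u$-measurable with $0\leq h\leq 1$ and vanishes outside $\{u=-\infty\}$, so Definition~\ref{DP_def_aux2} applies to $h$. The hypothesis $f=g$ a.e.\ with respect to $\mu_u$ forces $h=f=g$ a.e.\ w.r.t.\ $\mu_u$. Since $f$, $g$, and $h$ all vanish outside $\{u=-\infty\}$, the three Borel measures $f\ddcn{u}$, $g\ddcn{u}$, and $h\ddcn{u}$ coincide on all of $\Omega$.

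Next, Theorem~\ref{DP_thm_simple} gives $u^f,u^g,u^h\in\E$ together with
\[
\ddcn{u^f}=f\ddcn{u},\qquad \ddcn{u^g}=g\ddcn{u},\qquad \ddcn{u^h}=h\ddcn{u},
\]
and by the previous paragraph these three Monge-Amp\`{e}re measures are identical. The monotonicity noted right after Definition~\ref{DP_def_aux2}---if $g_1\leq g_2$, then $u^{g_1}\geq u^{g_2}$---yields $u^h\geq u^f$ and $u^h\geq u^g$, so both $u^f$ and $u^g$ are sandwiched between $u$ and $u^h$.

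Finally I would apply the identity principle (Theorem~\ref{ID_thm}) to the comparable pair $u^f\leq u^h$ with matching Monge-Amp\`{e}re measures to conclude $u^f=u^h$; the same argument with $g$ in place of $f$ gives $u^g=u^h$, and chaining these equalities produces $u^f=u^g$.

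The hard part will be verifying the hypotheses of Theorem~\ref{ID_thm}: one must exhibit an $H\in\E$ with $u^f,u^h\in\Ker(H)$ and produce some $w\in\E$, not identically zero, satisfying $\int_\Omega(-w)\ddcn{u^f}<+\infty$. For the $\Ker(H)$ membership I would trace the construction of $u^g$ in the proof of Theorem~\ref{DP_thm_simple} as a decreasing limit of functions $u^{g_j}\in\F\subseteq\Ker$ and exploit the common sandwich $u\leq u^{g_j}\leq 0$ to push a fixed majorant through the limit, so that $H=0$ suffices once appropriate $\F$-control is maintained. For the finiteness condition one may use Lemma~\ref{DP_lem_sing1} to bound $\ddcn{u^f}=f\ddcn{u}\leq\chi_{\{u=-\infty\}}\ddcn{u}$ and then test against any $w\in\Eo$.
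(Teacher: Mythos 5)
Your core strategy---inserting an auxiliary function comparable with both $u^f$ and $u^g$ whose Monge-Amp\`{e}re measure agrees with theirs, then invoking the identity principle (Theorem~\ref{ID_thm})---is exactly the paper's strategy; the paper uses $\max(f,g)$ where you use $\min(f,g)$, an immaterial difference since either choice produces a pair ordered in the direction Theorem~\ref{ID_thm} requires. The genuine problem is where you apply the identity principle. You apply it directly to $u^f$ and $u^h$ for an arbitrary $\mu_u$-measurable $f$, and although you correctly flag the hypotheses of Theorem~\ref{ID_thm} as the hard part, your proposed verification does not close the gap. First, $u^f$ need not lie in $\Ker(H)$ for any $H$: it is a decreasing limit of functions $u^{f_j}\in\F\subseteq\Ker$, but a decreasing limit of functions in $\Ker$ can leave $\Ker$ (for instance $\max(v,j\varphi)\in\Eo$ decreases to any maximal $v\leq 0$ when $\varphi\in\Eo$), and the sandwich $u\leq u^{f_j}\leq 0$ only yields $\widetilde{u^f}\geq\tilde{u}$, which is useless unless $u$ itself is already in $\Ker$. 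Second, the finiteness hypothesis $\int_\Omega(-w)\ddcn{u^f}<+\infty$ does not follow from ``testing against any $w\in\Eo$'': the bound $\ddcn{u^f}=f\ddcn{u}\leq\chi_{\{u=-\infty\}}\ddcn{u}$ only gives $\int_\Omega(-w)\ddcn{u^f}\leq\|w\|_{L^\infty(\Omega)}\,\mu_u(\Omega)$, and $\mu_u(\Omega)$ is in general infinite for $u\in\E\setminus\F$; there is no guarantee that any nonzero $w\in\E$ makes this integral finite.

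The paper avoids both obstacles by a two-step reduction that is missing from your plan: it first proves the statement for $f,g\in S$ (compactly supported simple functions), where Lemma~\ref{DP_lem_simple} places $u^f$, $u^g$ and $u^{\max(f,g)}$ in $\F$, so that $H=0$ works and the total Monge-Amp\`{e}re masses are finite, making Theorem~\ref{ID_thm} directly applicable; only then does it treat general $f,g$ by cutting off with $\chi_{\Omega_j}$ for a fundamental sequence $[\Omega_j]$ and letting $j\to+\infty$ in the resulting equalities $u^{\chi_{\Omega_j}f}=u^{\chi_{\Omega_j}g}$. To repair your argument, insert the same localization: run your $\min$-argument on $\chi_{\Omega_j}f$, $\chi_{\Omega_j}g$ and $\chi_{\Omega_j}\min(f,g)$, where compact support restores both the $\F$-membership and the finiteness of the mass, and pass to the limit only at the very end rather than inside the identity principle.
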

\begin{proof} Let $u\in\E$ and assume for now that $f,g\in S$. Then by Lemma~\ref{DP_lem_simple} we have that
$u^f,u^g\in\F$, $u^f\geq u^{\max(f,g)}$ and
\[
\ddcn{u^f}=f\ddcn{u}=\max(f,g)\ddcn{u}=\ddcn{u^{\max(f,g)}}\, .
\]
Hence, by Theorem~\ref{ID_thm} we have that $u^f= u^{\max(f,g)}$. Similarly
we get that $u^g= u^{\max(f,g)}$. Thus, $u^f=u^g$.

 For the general case let $[\Omega_j]$ be a fundamental sequence and let $f,g$,
$0\leq f,g\leq 1$, be two $\mu_u$-measurable functions that vanishes outside
$\{u=-\infty\}$. Our assumption that $f=g$, implies that
$\chi_{\Omega_j}f=\chi_{\Omega_j}g$ a.e. w.r.t. $\mu$
and by the first part of the proof we get that
$u^{\chi_{\Omega_j}f}=u^{\chi_{\Omega_j}g}$. The proof is then
completed by letting $j\to +\infty$.
\end{proof}

Example~\ref{DP_exem} shows that there exists a measure
$g\ddcn{u}$ carried by a pluripolar set that is not a discrete measure.

\begin{example}\label{DP_exem}
Let $\mu$ be a positive measure with no atoms and with support in
a compact polar subset the unit disc $\D$ (see e.g.~\cite{ransford}; p.82, and~\cite{carleson};
chapter IV, Theorem~1) Let $u$ be the subharmonic Green potential of $\mu$.
Consider $\nu =\mu\times\cdots\times\mu$ ($n$-times) and
$v(z_1,\ldots,z_n)= \max(u(z_1),...,u(z_n))$ on
$\D\times\cdots\times\D$ ($n$-times). Then $v\in\F$,
$\ddcn{v}=\nu$, $\nu$ has no atoms and it is supported by a pluripolar set.
\end{example}
\begin{lemma}\label{DP_lem_meas}
Assume that $\alpha,\beta_1,\beta_2$ are non-negative
measures defined on $\Omega$ which satisfies the following
conditions:
\begin{enumerate}

\item $\alpha$ vanishes on every pluripolar
subset of $\Omega$,

\item there exists a pluripolar sets $A\subset\Omega$ such that
$\beta_1(\Omega\backslash A)=0$.

\item for every $\rho\in \Eo\cap C(\bar\Omega)$ it holds that
\[
\int_\Omega (-\rho)\,\beta_1\leq\int_\Omega
(-\rho)\,(\alpha+\beta_2)<+\infty\, .
\]
\end{enumerate}
Then we have that
\[
\int_\Omega (-\rho)\,\beta_1\leq\int_\Omega (-\rho)\,\beta_2\, ,
\]
for every $\rho\in\Eo\cap C(\bar\Omega)$.
\end{lemma}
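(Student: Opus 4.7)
The plan is to exploit the mutual singularity between $\beta_1$, which is concentrated on the pluripolar set $A$, and $\alpha$, which vanishes on pluripolar sets. I will construct continuous admissible test functions that coincide with $\rho$ on $A$ but decay to $0$ off a pluripolar set, so that passing to the limit kills the $\alpha$-contribution while preserving the $\beta_1$-contribution.

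I fix $\rho\in\Eo\cap C(\bar\Omega)$ with $\rho\leq 0$. Since $A$ is pluripolar, there exists $\psi\in\PSH{\Omega}$, $\psi\leq 0$, with $A\subset\{\psi=-\infty\}$, and by Theorem~\ref{maj_thm_appr} I pick a decreasing sequence $\psi^{(k)}\in\Eo\cap C(\bar\Omega)$ with $\psi^{(k)}\downarrow\psi$. For $j,k\in\N$ I set
\[
\rho_j^{(k)}=\max\!\left(\rho,\tfrac{1}{j}\psi^{(k)}\right)\in\Eo\cap C(\bar\Omega),
\]
and apply hypothesis~(3):
\[
\int_\Omega(-\rho_j^{(k)})\,\beta_1\leq\int_\Omega(-\rho_j^{(k)})\,(\alpha+\beta_2)<+\infty.
\]

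Then I pass to the limits. As $k\to+\infty$, $\rho_j^{(k)}\downarrow\rho_j:=\max(\rho,\psi/j)$, so $-\rho_j^{(k)}\uparrow-\rho_j$; monotone convergence gives $\int(-\rho_j)\beta_1\leq\int(-\rho_j)(\alpha+\beta_2)$, with finiteness preserved since $\rho_j\geq\rho$ implies $\int(-\rho_j)(\alpha+\beta_2)\leq\int(-\rho)(\alpha+\beta_2)<+\infty$. As $j\to+\infty$, $\psi/j\uparrow 0$ on $\{\psi>-\infty\}$ while $\psi/j=-\infty$ on $\{\psi=-\infty\}$, so $\rho_j$ increases pointwise to
\[
\hat\rho(z)=\begin{cases}\rho(z)&\text{if }z\in\{\psi=-\infty\},\\ 0&\text{if }z\in\{\psi>-\infty\}.\end{cases}
\]
Since $-\rho_j$ decreases to $-\hat\rho$ with integrable majorant $-\rho$, dominated convergence yields $\int(-\hat\rho)\beta_1\leq\int(-\hat\rho)(\alpha+\beta_2)$.

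Finally I evaluate the three limit integrals. Because $\beta_1$ is supported on $A\subset\{\psi=-\infty\}$ where $\hat\rho=\rho$, one has $\int(-\hat\rho)\beta_1=\int(-\rho)\beta_1$. Because $\alpha$ vanishes on the pluripolar set $\{\psi=-\infty\}$ and $\hat\rho=0$ elsewhere, $\int(-\hat\rho)\alpha=0$. And $\int(-\hat\rho)\beta_2=\int_{\{\psi=-\infty\}}(-\rho)\beta_2\leq\int(-\rho)\beta_2$. Combining these identities with the limit inequality produces the desired $\int(-\rho)\beta_1\leq\int(-\rho)\beta_2$. The main obstacle I anticipate is the admissibility bookkeeping, namely verifying $\rho_j^{(k)}\in\Eo\cap C(\bar\Omega)$ (closure of $\Eo\cap C$ under maxima on a bounded hyperconvex domain) and applying dominated convergence at the $j\to+\infty$ step; both rest cleanly on the a priori finiteness $\int(-\rho)(\alpha+\beta_2)<+\infty$ supplied by hypothesis~(3).
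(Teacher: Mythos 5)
Your proof is correct and follows essentially the same route as the paper: replace $\rho$ by $\max(\rho,\varphi/j)$ for a plurisubharmonic $\varphi$ with $A\subseteq\{\varphi=-\infty\}$, let $j\to+\infty$ to confine all three integrals to $\{\varphi=-\infty\}$, and use that $\alpha$ vanishes on that pluripolar set while $\beta_1$ lives there. Your additional layer of continuous approximants $\psi^{(k)}$ is a sensible refinement rather than a deviation, since the paper applies hypothesis~(3) directly to $\max(\rho,\varphi/j)$, which need not lie in $\Eo\cap C(\bar\Omega)$.
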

\begin{proof} Since $A$ is pluripolar and $\Omega$ is
bounded there exists a function $\varphi\in\PSH{\Omega}$,
$\varphi\leq 0$, such that $A\subseteq \{\varphi=-\infty\}$.
Take $\rho\in\Eo\cap C(\bar\Omega)$ and set
$\rho_j=\max\left(\rho,\frac{\varphi}{j}\right)$. Then we have
that $\int_\Omega (-\rho_j)\,\beta_1\leq\int_\Omega
(-\rho_j)\,(\alpha+\beta_2)<+\infty$ and by letting $j\to
+\infty$ we get that
\[
 \int_{\{\varphi=-\infty\}} (-\rho) \,\beta_1\leq\int_{\{\varphi=-\infty\}}
(-\rho)\,(\alpha+\beta_2)\, .
\]
But $\alpha$ vanishes on pluripolar sets and $\beta_1$ and
$\beta_2$ are carried by sets contained in $\{\varphi=-\infty\}$.
Thus,
\[
\int_\Omega (-\rho) \,\beta_1\leq\int_\Omega (-\varphi) \,\beta_2\,
,
\]
for every $\rho\in\Eo\cap C(\bar\Omega)$.
\end{proof}

Let $u\in\E$, then by Theorem~5.11 in~\cite{cegrell_gdm} there
exist functions $\phi_u\in\Eo$ and $f_u\in
L^{1}_{loc}\big(\ddcn{\phi_u}\big)$, $f_u\geq 0$ such that
$\ddcn{u}=f_u\,\ddcn{\phi_u}+\beta_u$. The non-negative measure
$\beta_u$ is such that there exists a pluripolar set
$A\subseteq\Omega$ such that $\beta_u (\Omega\backslash A)=0$. In
Lemma~\ref{DP_lem_pp} we will use the notation that
$\alpha_u=f_u\,\ddcn{\phi_u}$ and $\beta_u$ refereing to this
decomposition.

\begin{lemma}\label{DP_lem_pp} Let $u,v\in\E$.
If there exists a function $\varphi\in\E$ such that
$\ddcn{\varphi}$ vanishes on pluripolar sets and $|u-v|\leq
-\varphi$, then $\beta_u=\beta_v$.
\end{lemma}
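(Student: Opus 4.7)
The plan is as follows. The hypothesis $|u-v|\leq-\varphi$ gives the two one-sided bounds $u\geq v+\varphi$ and $v\geq u+\varphi$, and since $\E$ is closed under addition the auxiliary functions $v+\varphi$ and $u+\varphi$ both lie in $\E$. My strategy is to reduce the claim $\beta_u=\beta_v$ to showing that $\int_A\ddcn{u}=\int_A\ddcn{v}$ for every pluripolar Borel set $A\subseteq\Omega$. Indeed, fixing a pluripolar set $A_0$ carrying both $\beta_u$ and $\beta_v$ and using that $\alpha_u$ and $\alpha_v$ vanish on every pluripolar set, one has $\beta_u(B)=\int_{B\cap A_0}\ddcn{u}$ and $\beta_v(B)=\int_{B\cap A_0}\ddcn{v}$ for every Borel $B\subseteq\Omega$, so the equality of the pluripolar Monge-Amp\`ere masses is precisely what is required.

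To obtain the pluripolar inequality I would invoke the first remark following Lemma~\ref{DP_lem_sing1} with all entries equal to $u$ on one side and all entries equal to $v+\varphi$ on the other; from $u\geq v+\varphi$ this yields
\[
\int_A\ddcn{u}\leq\int_A\ddcn{(v+\varphi)}
\]
for every pluripolar Borel $A\subseteq\Omega$. I would then expand
\[
\ddcn{(v+\varphi)}=\sum_{k=0}^{n}\binom{n}{k}\,(dd^cv)^k\wedge(dd^c\varphi)^{n-k}
\]
by multilinearity of the complex Monge-Amp\`ere operator on $\E$, and kill every term with $n-k\geq 1$ using Lemma~\ref{DP_lem_CS}: that estimate dominates $\int_A(dd^cv)^k\wedge(dd^c\varphi)^{n-k}$ by a product of $n$-th roots of $\int_A\ddcn{v}$ and $\int_A\ddcn{\varphi}$, and the latter vanishes by the hypothesis on $\varphi$. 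Only the pure term $k=n$ survives, so $\int_A\ddcn{(v+\varphi)}=\int_A\ddcn{v}$ and hence $\int_A\ddcn{u}\leq\int_A\ddcn{v}$ for every pluripolar Borel $A$.

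Exchanging the roles of $u$ and $v$ via the twin inequality $v\geq u+\varphi$ gives the reverse inequality, and hence equality on every pluripolar Borel $A$; the initial reduction then delivers $\beta_u=\beta_v$. The main technical point to verify is the multinomial expansion of $\ddcn{(v+\varphi)}$, since $v$ and $\varphi$ need not be bounded: the natural approach is to approximate $v,\varphi\in\E$ by decreasing sequences from $\Eo$ via Theorem~\ref{maj_thm_appr}, apply the expansion for bounded functions where it is standard, and pass to the limit using the continuity of the Monge-Amp\`ere operator along decreasing sequences in $\E$ from~\cite{cegrell_gdm}. With that identity in hand the remaining steps are immediate.
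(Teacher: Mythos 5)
Your proof is correct, but it follows a genuinely different route from the paper's. The paper first localizes to reduce to $u,v,\varphi\in\F$, then uses the global weighted inequality of Lemma~\ref{DP_lem_ineq} applied to $v+\varphi\leq u$ to get $\int_\Omega(-\rho)\ddcn{u}\leq\int_\Omega(-\rho)\ddcn{(v+\varphi)}$ for $\rho\in\Eo$, identifies $\beta_{v+\varphi}=\beta_v$ via the same multinomial expansion you use (the mixed terms being $\ll C_n$), strips off the non-pluripolar parts with Lemma~\ref{DP_lem_meas}, and finally concludes $\beta_u=\beta_v$ by symmetry and Lemma~3.1 of~\cite{cegrell_gdm}. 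You instead work directly with masses on pluripolar Borel sets: the remark after Lemma~\ref{DP_lem_sing1} turns $u\geq v+\varphi$ into $\int_A\ddcn{u}\leq\int_A\ddcn{(v+\varphi)}$ for pluripolar $A$, and Lemma~\ref{DP_lem_CS} annihilates every mixed term there because $\int_A\ddcn{\varphi}=0$; your opening reduction (that $\beta_u$ is recovered as $\ddcn{u}$ restricted to a common pluripolar carrier, by uniqueness of the decomposition) then closes the argument. This is arguably cleaner: it avoids Lemmas~\ref{DP_lem_ineq} and~\ref{DP_lem_meas}, the reduction to $\F$, and the passage through test functions $\rho$, at the cost of leaning on the two pluripolar-mass estimates. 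Both routes share the one genuinely nontrivial ingredient, the multinomial expansion of $\ddcn{(v+\varphi)}$ for unbounded entries in $\E$, which you correctly flag and which the paper also uses without comment; the only other point worth a word in your write-up is that in the H\"older-type estimate of Lemma~\ref{DP_lem_CS} the factor $\left(\int_A\ddcn{v}\right)^{k/n}$ is finite (take $A$ with compact closure, or exhaust), so that multiplying by $\left(\int_A\ddcn{\varphi}\right)^{(n-k)/n}=0$ indeed gives $0$.
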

\begin{proof} Let $\Omega^\prime\Subset\Omega$. It follows
from Lemma~\ref{DP_lem_sing1} that there is no loss of generality
to assume that $u,v,\varphi\in\F$, since it is sufficient to prove
that $\beta_u=\beta_v$ on $\Omega^\prime$. The assumption that
$|u-v|\leq -\varphi$ yields that $v+\varphi\leq u$ and therefore
it follows from Lemma~\ref{DP_lem_ineq} that
\begin{equation}\label{DP_lem_pp_1}
\int_\Omega(-\rho)\ddcn{u}\leq\int
(-\rho)\ddcn{(v+\varphi)}<+\infty\, ,
\end{equation}
where $\rho\in\Eo$. Since
$\sum_{j=1}^{n}\binom{n}{j}\ddck{\varphi}{j}\wedge\ddck{v}{n-j}\ll
C_n$  we have that $\beta_{v+\varphi}=\beta_v$ and
\[
\alpha_{v+\varphi}=\alpha_v+\sum_{j=1}^{n}\binom{n}{j}\ddck{\varphi}{j}\wedge\ddck{v}{n-j}\,
.
\]
Lemma~\ref{DP_lem_meas} and inequality~(\ref{DP_lem_pp_1}) yields
that
\[
\int_\Omega (-\rho)\, \beta_u\leq \int_\Omega (-\rho)\,\beta_v\, ,
\]
for every $\rho\in\Eo$. In a similar manner we get that
\[
\int_\Omega (-\rho)\, \beta_v\leq \int_\Omega (-\rho)\,\beta_u\, ,
\]
for every $\rho\in\Eo$. From Lemma~3.1 in~\cite{cegrell_gdm} it
now follows that $\beta_u=\beta_v$.
\end{proof}
\begin{lemma}\label{DP_lem_DP} Let $H\in\E\cap\MPSH{\Omega}$.

\begin{enumerate}

\item If $v\in\Ker$, $\ddcn{v}$ is carried by a pluripolar set, and $\int_\Omega(-\rho)\ddcn{v}<+\infty$
for all $\rho\in\Eo\cap C(\bar \Omega)$, then
\[
u=\sup\left\{\varphi\in\PSH{\Omega}:\; \varphi\leq\min(v,H)\right\}\in\Ker (H)\, ,
\]
and $\ddcn{u}=\ddcn{v}$.

\item Assume that $\psi\in \Ker$, $\ddcn{\psi}$ vanishes on pluripolar sets, $v\in\Ker(H)$,
$\ddcn{v}$ is carried by a pluripolar set, and $\int_\Omega(-\rho)(\ddcn{\psi} + \ddcn{v})<+\infty$
for all $\rho\in\Eo\cap C(\bar \Omega)$. If $u$ is the function
defined on $\Omega$ by
\[
u=\sup\left\{\varphi:\; \varphi\in\B \left(\ddcn{\psi},v)\right)\right\}\, ,
\]
where
\[
\B \left(\ddcn{\psi},v\right)=\left\{\varphi\in\E:\;
\ddcn{\psi}\leq\ddcn{\varphi} \text{ and }
\varphi\leq v\right\}\, ,
\]
then $u\in\Ker (H)$ and $\ddcn{u}=\ddcn{\psi}+\ddcn{v}$.

\end{enumerate}
\end{lemma}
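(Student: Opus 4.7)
The plan for both parts is to analyze the defining upper envelope $u$: first check membership in $\Ker(H)$ via an explicit lower competitor, then decompose $\ddcn{u}$ into its singular and absolutely continuous (w.r.t.\ capacity) parts and match each against the target.

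For Part~(1), since $v, H \leq 0$ the sum $v + H$ is plurisubharmonic with $v + H \leq \min(v, H)$, so it is admissible in the sup defining $u$; hence $v + H \leq u \leq H$, and combined with $v \in \Ker$ this gives $u \in \Ker(H)$ directly from Definition~\ref{maj_def_N(H)}. The inequality $\ddcn{u} \geq \ddcn{v}$ follows from $u \leq v$: the remark after Lemma~\ref{DP_lem_sing1} provides $\int_A \ddcn{v} \leq \int_A \ddcn{u}$ for every pluripolar Borel set $A$, and since $\ddcn{v}$ is pluripolar-carried this upgrades to the measure inequality globally. For the reverse, from $v + H \leq u \leq v$ one has $|u - v| \leq -H$, and since $H$ is maximal, $\ddcn{H} = 0$ vanishes on every set; so Lemma~\ref{DP_lem_pp} delivers $\beta_u = \beta_v = \ddcn{v}$, reducing matters to showing the absolutely continuous part $\alpha_u$ vanishes. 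I would establish this by introducing $u_j = \max(u, v - 1/j) \uparrow v$, combining the Bedford--Taylor formula for Monge-Amp\`ere of a maximum with the weak convergence $\ddcn{u_j} \to \ddcn{v}$ of increasing sequences (Corollary~5.2 in~\cite{cegrell_gdm}) to extract $\chi_{\{u = v\}}\ddcn{u} = \chi_{\{u = v\}}\ddcn{v}$, and noting that on the complementary set $\{u < v\}$ the envelope property forces $u = H$ quasi-everywhere, so maximality of $H$ and locality of the Monge-Amp\`ere operator yield $\ddcn{u} = 0$ there.

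Part~(2) proceeds in parallel. The function $v + \psi$ lies in $\B(\ddcn{\psi}, v)$: clearly $v + \psi \leq v$, and the binomial expansion $\ddcn{(v + \psi)} = \sum_{k=0}^{n} \binom{n}{k}\ddck{\psi}{n-k} \wedge \ddck{v}{k}$ has all summands non-negative with the $k = 0$ term equal to $\ddcn{\psi}$. Hence $v + \psi \leq u$, from which $u \in \Ker(H)$ follows because $v \in \Ker(H)$, $\psi \in \Ker$, and $\Ker$ is a convex cone. The inequality $\ddcn{u} \geq \ddcn{\psi} + \ddcn{v}$ combines $\ddcn{u} \geq \ddcn{\psi}$ (passing the defining constraint of $\B$ to the envelope via weak convergence along an increasing sequence in $\B$) with $\ddcn{u} \geq \ddcn{v}$ (exactly as in Part~(1)), together with mutual singularity of the absolutely continuous $\ddcn{\psi}$ and the pluripolar-carried $\ddcn{v}$. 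Singular parts are matched by Lemma~\ref{DP_lem_pp} applied to $|u - v| \leq -\psi$ (valid since $\ddcn{\psi}$ vanishes on pluripolar sets), giving $\beta_u = \ddcn{v}$. To pin down the absolutely continuous part, I would construct a candidate $\tilde u \in \B$ with $\ddcn{\tilde u} = \ddcn{\psi} + \ddcn{v}$ by first solving $\ddcn{w} = \ddcn{\psi}$ in $\Ker(H)$ via Theorem~\ref{xing_thm_dir_N(H)} (applicable since $\ddcn{H} = 0 \leq \ddcn{\psi}$) and $\ddcn{v_*} = \ddcn{v}$ in $\Ker(H)$ via Part~(1), then gluing $w$ and $v_*$ through a further envelope so that $\tilde u \leq v$. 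Since $\tilde u \in \B$ one has $\tilde u \leq u$, and Lemma~\ref{DP_lem_ineq} supplies $\int(-\rho)\ddcn{\tilde u} \geq \int(-\rho)\ddcn{u}$ for $\rho \in \Eo$; combined with $\ddcn{u} \geq \ddcn{\tilde u}$ this forces the sought equality $\ddcn{u} = \ddcn{\psi} + \ddcn{v}$.

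The main obstacles are twofold. First, the contact-set analysis in Part~(1) requires upgrading locality of the Monge-Amp\`ere operator from open sets to the (only Borel) set $\{u < v\}$, in order to transfer the quasi-everywhere coincidence $u = H$ on this set into the vanishing $\ddcn{u} = 0$ there; a clean route uses the auxiliary sequence $\max(u, H - \epsilon)$ and lets $\epsilon \to 0^+$. Second, in Part~(2), producing $\tilde u$ with exactly the prescribed sum $\ddcn{\psi} + \ddcn{v}$ --- rather than the naive $\ddcn{(w + v_*)}$, which carries unwanted mixed terms of the form $\ddck{w}{k} \wedge \ddck{v_*}{n-k}$ with $1 \leq k \leq n-1$ --- calls for a further envelope construction combining $w$ and $v_*$, whose form should mirror the envelopes already encountered in the statement.
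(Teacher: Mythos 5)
Your treatment of the routine parts is sound and matches the paper: the competitors $v+H$ (Part 1) and $v+\psi$ (Part 2) give membership in $\Ker (H)$, and Lemma~\ref{DP_lem_pp} applied to $|u-v|\leq -H$, resp.\ $|u-v|\leq -\psi$, correctly identifies the pluripolar-carried part $\beta_u=\ddcn{v}$. But the two steps you yourself flag as ``main obstacles'' are genuine gaps, and they are exactly where the content of the lemma lies. In Part (1), your argument that $\alpha_u=0$ rests on (i) the identity $\chi_{\{u=v\}}\ddcn{u}=\chi_{\{u=v\}}\ddcn{v}$ and (ii) ``$u=H$ quasi-everywhere on $\{u<v\}$, hence $\ddcn{u}=0$ there by maximality and locality''. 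Neither is available: the obstacle $\min(v,H)$ is merely upper semicontinuous, so $\{u=v\}$ and $\{u<v\}$ are only Borel sets, locality of the Monge-Amp\`{e}re operator does not apply to them, and equality of Monge-Amp\`{e}re measures on a Borel coincidence set of two functions with $u\leq v$ fails in general (only the one-sided inequality on pluripolar sets from Lemma~\ref{DP_lem_sing1} survives); your proposed fixes via $\max(u,v-1/j)$ and $\max(u,H-\epsilon)$ only yield inequalities after passing weak limits. In Part (2) the argument is circular: you propose to first build a competitor $\tilde u$ with $\ddcn{\tilde u}=\ddcn{\psi}+\ddcn{v}$ exactly, which is essentially the assertion being proved, and you acknowledge without resolving it that the naive candidate $w+v_*$ carries unwanted mixed terms.

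The device you are missing is the paper's regularization of $v$ from above: take a decreasing sequence $[v_j]$ with $v_j\in\Eo\cap C(\bar\Omega)$ in Part (1) (resp.\ $v_j\in\Eo (H)$ in Part (2), via Proposition~\ref{maj_prop_appr}). For these, $\ddcn{v_j}$ (resp.\ $\ddcn{\psi}+\ddcn{v_j}$) vanishes on pluripolar sets, so Theorem~\ref{xing_thm_dir_N(H)} produces exact solutions $w_j\in\Ker (H)$ with $\ddcn{w_j}=\ddcn{v_j}$ (resp.\ $=\ddcn{\psi}+\ddcn{v_j}$). The comparison principle places $w_j$ below the $j$-th envelope $u_j$, Lemma~\ref{DP_lem_ineq} gives $\int_\Omega(-\rho)\ddcn{u_j}\leq\int_\Omega(-\rho)\ddcn{w_j}$, and Corollary~\ref{DP_cor_conv} passes this to the limit, yielding $\int_\Omega(-\rho)\ddcn{u}\leq\int_\Omega(-\rho)\left(\ddcn{\psi}+\ddcn{v}\right)$ for all $\rho\in\Eo\cap C(\bar\Omega)$. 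Combined with $\beta_u=\ddcn{v}$ and (in Part (2)) $\alpha_u\geq\ddcn{\psi}$, this upper energy bound forces $\alpha_u=0$, resp.\ $\alpha_u=\ddcn{\psi}$ --- which is precisely what your contact-set analysis and competitor construction were trying, and failing, to deliver.
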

\begin{proof}
 (1): Since $\min(v,H)$ is a negative and upper semicontinuous function we
 have that $u\in\PSH{\Omega}$ and $H\geq u \geq v+H$. Furthermore, $u\in\Ker (H)$,
 since $v\in \Ker (H)$. By Theorem~\ref{maj_thm_appr} we can choose a decreasing
 sequence $[v_j]$, $v_j\in\Eo\cap C(\bar \Omega)$, that converge pointwise to $v$ as
 $j\to +\infty$, and use Theorem~\ref{xing_thm_dir_N(H)} to solve
 $\ddcn{w_j}=\ddcn{v_j}$, $w_j\in\Ker (H)$, $j\in\N$. Consider
 \[
 u_j=\sup\left\{\varphi\in\PSH{\Omega}:\; \varphi\leq\min(v_j,H)\right\}\in\Eo (H)\, .
 \]
Then $u_j\geq w_j$, so by Lemma~\ref{DP_lem_ineq} $\int_\Omega(-\rho)\ddcn{u_j}\leq \int_D(-\rho)\ddcn{w_j}$.
Corollary~\ref{DP_cor_conv} now yields that
\[
\int_\Omega(-\rho)\ddcn{u}\leq \int_\Omega(-\rho)\ddcn{v}\;\;\text{ for all } \rho\in\Eo\cap C(\bar \Omega)\, ,
\]
and therefore it follows that $\ddcn{u}$ is carried by $\{u=-\infty\}$ and since
$v\geq u \geq v+H$ it follows from Lemma~\ref{DP_lem_pp} that $\ddcn{u}=\ddcn{v}$.
Thus, part (1) of this proof is completed.

(2): Using the classical Choquet's lemma (see e.g.~\cite{klimek}) and
Proposition~4.3 in~\cite{Khue_Hiep} we derive that $u\in\E$ and
$\ddcn{u}\geq\ddcn{\psi}$. Note that $u\in\PSH{\Omega}$, $u\leq 0$, as soon as
$v$ is only negative and upper semicontinuous and $\B \left(\ddcn{\psi},v\right)\neq\emptyset$.
Theorem~5.11 in~\cite{cegrell_gdm} gives that $\ddcn{u}=\alpha +\beta$, where $\alpha$ and
$\beta$ are positive measures defined on $\Omega$, such that $\alpha$ vanishes on
all pluripolar sets and $\beta$ is carried by a pluripolar set. The function
$(\psi+v)$ belongs to $\B \left(\ddcn{\psi},v\right)$ and
therefore we have that $v+\psi\leq u\leq v$. Hence
$u\in\Ker (H)$. By Lemma~\ref{DP_lem_pp} we have that $\beta=(dd^cv)^n$,
and we have already noted that $\alpha\geq\ddcn{\psi}$. Proposition~\ref{maj_prop_appr} implies
that there exists a decreasing sequence, $[v_{j}]$, $v_{j}\in\Eo (H)$, that
 converges pointwise to $v$, as $j\to +\infty$. Now,
\[
\int_\Omega(-\rho)\big(\ddcn{\psi} + \ddcn{v_j}\big)<+\infty\;\; \text{ for all } \rho\in\Eo\cap C(\bar \Omega)
\]
so by the last remark in section~3 and Theorem~\ref{xing_thm_dir_N(H)}, there exists a unique
function $w_j\in\Ker (H)$ with $\ddcn{w_j}=\ddcn{\psi} + \ddcn{v_j}$. It follows
from Corollary~\ref{xing_cor_CP} that $w_j\in \B \left(\ddcn{\psi},v_j\right)$, so if
we let
\[
u_j=\sup\left\{\varphi:\; \varphi\in\B \left(\ddcn{\psi},v_j\right)\right\}\, ,
\]
then $[u_{j}]$ decreases pointwise to $u$, as $j\to +\infty$. Furthermore,
Lemma~\ref{DP_lem_ineq} implies that
\[
\int_{\Omega}(-\rho)\ddcn{u_j}\leq \int_{\Omega}(-\rho)\ddcn{w_j}= \int_{\Omega}(-\rho)\big(\ddcn{\psi} + \ddcn{v_j}\big)\, .
\]
Let $j\to +\infty$, then Corollary~\ref{DP_cor_conv} yields that
\[
\int_\Omega (-\rho) \ddcn{u}\leq \int_\Omega
(-\rho)\big(\ddcn{\psi}+\beta\big)\, ,
\]
hence $\int_\Omega (-\rho ) (\alpha + \beta )\leq \int_\Omega (-\rho ) \big( \ddcn{\psi}+\beta \big)$. Since we know that $\alpha\geq\ddcn{\psi}$ it follows that
for all $\rho\in\Eo\cap C(\bar \Omega)$ we have that $\int_\Omega \rho\alpha=\int_\Omega\rho\ddcn{\psi}$, and therefore
is $\alpha=\ddcn{\psi}$. Thus, this proof is completed.
\end{proof}
\begin{theorem}\label{DP_thm_sub}
Assume that  $\mu$ is a non-negative measure.
\begin{enumerate}

\item There exist functions $\phi\in\Eo$, $f\in
L^{1}_{loc}\big(\ddcn{\phi}\big)$, $f\geq 0$, such that
\[
\mu=f\,\ddcn{\phi}+\nu\, ,
\]
where the non-negative measure $\nu$ is carried by a pluripolar subset of $\Omega$.

\item If there exists a function $w\in\E$ with $\mu\leq\ddcn{w}$, then there exist functions
$\psi,v\in\E$, $v,\psi\geq w$, such that
\begin{align*}
&\ddcn{\psi} =  f\ddcn{\phi}\\
&\ddcn{v}  = \nu\, ,
\end{align*}
where $\nu$ is carried by $\{v=-\infty\}$.

\item If there exists a function $w\in\E$ with $\mu\leq\ddcn{w}$, then to every  $H\in\E\cap\MPSH{\Omega}$
 there exists a function $u\in\E$, $w+H\leq u\leq H$, with $\ddcn{u}=\mu$. In particular, if $w\in\Ker$,
 then $u\in\Ker (H)$.
\end{enumerate}
\end{theorem}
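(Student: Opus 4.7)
The proof proceeds in three parts, building on the preceding lemmas.

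For part (1), I would decompose $\mu=\mu_1+\nu$, where $\mu_1$ vanishes on all pluripolar subsets of $\Omega$ and $\nu$ is carried by a pluripolar set (a Radon-Nikodym-type decomposition with respect to the $\sigma$-ideal of pluripolar sets). By Cegrell's representation theorem (Theorem~5.11 in~\cite{cegrell_gdm}), $\mu_1$ can be written as $f\ddcn{\phi}$ for some $\phi\in\Eo$ and $f\in L^{1}_{loc}(\ddcn{\phi})$, $f\geq 0$.

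For part (2), I would treat the singular and absolutely continuous parts separately. For the singular part: Lemma~\ref{DP_lem_sing1} shows that the singular part of $\ddcn{w}$ is carried by $\{w=-\infty\}$, so $\nu\leq\ddcn{w}$ forces $\nu$ to be carried by $\{w=-\infty\}$. Letting $g:=d\nu/d\mu_w$ with $\mu_w=\chi_{\{w=-\infty\}}\ddcn{w}$, one has $0\leq g\leq 1$ and $g$ vanishes outside $\{w=-\infty\}$. By Theorem~\ref{DP_thm_simple}, $v:=w^g\in\E$ satisfies $\ddcn{v}=g\ddcn{w}=\nu$ and $v\geq w$; moreover, $\nu$ is carried by $\{v=-\infty\}$ since $\ddcn{v}$ is entirely singular. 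For the absolutely continuous part, the measure $f\ddcn{\phi}$ vanishes on pluripolar sets and is dominated by $\ddcn{w}$. Using the remark after Theorem~\ref{xing_thm_dir_N(H)} (after exhausting by a fundamental sequence $[\Omega_j]$ if needed to secure integrability), produce $\psi\in\Ker$ with $\ddcn{\psi}=f\ddcn{\phi}$; the comparison principle (Corollary~\ref{xing_cor_CP}(2)) applied to $\psi\in\Ker(0)$ and $w\leq 0$ then yields $\psi\geq w$.

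For part (3), I would combine $\psi$ and $v$ with $H$ via Lemma~\ref{DP_lem_DP}. First apply Lemma~\ref{DP_lem_DP}(1) to $v$, obtaining
\[
v^\sharp:=\big(\sup\{\varphi\in\PSH{\Omega}:\;\varphi\leq\min(v,H)\}\big)^*\in\Ker(H)
\]
with $\ddcn{v^\sharp}=\nu$. Since $v+H$ is a competitor in this supremum, $v^\sharp\geq v+H\geq w+H$, while $v^\sharp\leq H$ by construction. Next, apply Lemma~\ref{DP_lem_DP}(2) to $\psi$, $v^\sharp$, and $H$: the function
\[
u:=\sup\big\{\varphi:\;\varphi\in\B(\ddcn{\psi},v^\sharp)\big\}
\]
then satisfies $u\in\Ker(H)$ and $\ddcn{u}=\ddcn{\psi}+\ddcn{v^\sharp}=f\ddcn{\phi}+\nu=\mu$. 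To secure the bound $u\geq w+H$, I would observe that $w+H$ itself lies in $\B(\ddcn{\psi},v^\sharp)$: the inequality $\ddcn{(w+H)}\geq\ddcn{w}\geq\mu\geq f\ddcn{\phi}=\ddcn{\psi}$ follows from the positivity of mixed Monge-Amp\`ere currents, and $w+H\leq v^\sharp$ was established. Hence $u\geq w+H$, while $u\leq v^\sharp\leq H$. The ``in particular'' statement is then immediate from Lemma~\ref{DP_lem_DP}(2).

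The main obstacles will be: (i)~verifying the integrability hypothesis $\int_\Omega(-\rho)(\ddcn{\psi}+\ddcn{v^\sharp})<+\infty$ for all $\rho\in\Eo\cap C(\bar\Omega)$ required by Lemma~\ref{DP_lem_DP}(2), which reduces to $\int_\Omega(-\rho)\,d\mu<+\infty$ and may fail when $w\in\E$ has infinite total Monge-Amp\`ere mass; and (ii)~ensuring $\psi,v^\sharp$ lie in the sub-classes $\Ker$ and $\Ker(H)$ respectively when $w\in\E\setminus\Ker$. Both are addressed by exhausting $\Omega$ with a fundamental sequence $[\Omega_j]$, solving the restricted problem on each $\Omega_j$ (where the relevant masses are finite and the natural class is $\F$), and passing to a monotone limit using the comparison principle of Corollary~\ref{xing_cor_CP} together with the convergence result of Corollary~\ref{DP_cor_conv}.
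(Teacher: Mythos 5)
Your proposal is correct and follows essentially the same route as the paper: decompose $\mu$ via Cegrell's Theorem~5.11, solve for the part carried by a pluripolar set with Theorem~\ref{DP_thm_simple} (i.e., $v=w^g$) and for the part vanishing on pluripolar sets with Ko{\l}odziej-type solvability plus the comparison principle, then glue the two with $H$ through Lemma~\ref{DP_lem_DP} and pass to a decreasing limit using Corollaries~\ref{xing_cor_CP} and~\ref{DP_cor_conv}. The only cosmetic difference is that the paper secures the finiteness and class hypotheses by truncating the density ($\min(f,j)$, via Ko{\l}odziej's theorem) and by compactly supported simple functions $g_j$ for the singular part, rather than by restricting to a fundamental sequence $[\Omega_j]$ as you suggest; both devices serve the same purpose, and you correctly identify the two obstacles they are meant to overcome.
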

\begin{proof}
(1): This is Theorem~5.11 in~\cite{cegrell_gdm}.

(2): Using the Radon-Nikodym theorem and the decomposition in part (1) (with the same notation)
we obtain that
\[
f \ddcn{\phi}= \tau\chi_{\{w>-\infty\}}\ddcn{w}\;\;\text{ and }\;\; \nu=\tau\chi_{\{w=-\infty\}}\ddcn{w}\, ,
\]
where $0\leq \tau\leq 1$ is a Borel function. For each $j\in\N$, let $\mu_{j}$ be the measure defined by
$\mu_{j}= \min (\varphi,j) \ddcn{\phi}$. Hence, $\mu_{j}\leq
\ddcn{(j^{\frac{1}{n}}\,\psi)}$ and therefore by Ko{\l}odziej's
theorem (see~\cite{kolo_range}) there exists a uniquely determined function
$\psi_{j}\in\Eo$ such that $\ddcn{\psi_j}=\mu_j$. The comparison
principle (Corollary~\ref{xing_cor_CP}) imply that $\psi_j\geq w$
and that $[\psi_j]$ is a decreasing sequence. The function
$\psi=\lim_{j\to +\infty}\psi_j$ is then in $\E$ and
$\ddcn{\psi}=f\,\ddcn{\phi}$. Theorem~\ref{DP_thm_simple} implies
that exists a functions $v\in\E$ such that $\ddcn{v}=\nu$ and $v\geq w$. Thus,
\[
\ddcn{\psi}=f\ddcn{\phi}\;\;\text{ and }\;\;\ddcn{v}=\nu\, .
\]

(3): Continuing with the same notations as in part (1) and (2), we choose an increasing sequence of simple functions $[g_j]$, $\supp \,g_j\Subset\Omega$, that converges to
$g=\chi_{\{w=-\infty\}}\tau$, as $j\to+\infty$. By
Theorem~\ref{DP_thm_simple} we have that $w^{g_j}\in\F$, $\ddcn{w^{g_j}}=g_j\ddcn{w}$ and $[w^{g_j}]$ is a decreasing
sequence that converges pointwise to $w^g$, as $j\to+\infty$.
Moreover $w^g\geq w$. Hence
$\ddcn{w^g}=\chi_{\{w=-\infty\}}\tau\ddcn{w}$. Set
\[
u_j=\sup\left\{\varphi\in\B \left(\ddcn{\psi_j},\min(w^{g_j},
H)\right)\right\}\, ,
\]
where
\[
\B \left(\ddcn{\psi_j},\min(w^{g_j}, H)\right)=\left\{\varphi\in\E :\;
\ddcn{\psi_j}\leq\ddcn{\varphi} \text{ and } \varphi\leq \min(w^{g_j},H)\right\}\, .
\]
This construction implies that $[u_j]$ is a decreasing sequence. The sequence $[u_j]$ converges to some
plurisubharmonic function $u$, as $j\to +\infty$, and by Lemma~\ref{DP_lem_DP} $u_j\in\Ker (H)$ with  $\ddcn{u_j}=\ddcn{\psi_j}+\ddcn{w^{g_j}}$. Furthermore, we have that
that $w+H\leq u_j\leq H$. We conclude the proof by letting $j\to +\infty$.

\end{proof}
\begin{remark}
Theorem~\ref{DP_thm_sub} generalize Theorem~4.4
in~\cite{czyz_cegrell}, Theorem~6.2 in~\cite{cegrell_gdm}, and
Corollary~1 in~\cite{xing_dec}.
\end{remark}
\begin{remark} Let $u_1,\ldots,u_n\in\E$. Then it follows from
Theorem~\ref{DP_thm_sub} that there exists a function $u\in\E$
such that $\ddcn{u}=dd^c u_1\wedge\cdots\wedge dd^cu_n$.
\end{remark}

\end{document}